\documentclass[12pt]{article}
\usepackage[margin=1in]{geometry} 
\usepackage{amsthm,amssymb,amsmath}
\usepackage{mathtools}
\usepackage{esvect}
\usepackage{yfonts}
\usepackage{tikz-cd}
\usepackage[bb=boondox]{mathalfa}
\usepackage{titling}
\usepackage{lipsum}
\usepackage{enumerate}
\usepackage{accents}

% Trick so that math symbols are bold when text is.
\let\seiresfb\bfseries\def\bfseries{\boldmath\seiresfb}
\let\seiresdm\mdseries\def\mdseries{\unboldmath\seiresdm}

% General fonts and characters.
  % alternate way to emphasize
  % strong emphasis
  % code fragments
  % constants
  % multi-letter variables (or constants)
% \
% ^
% _
% {
% }
% ~
  % requires amssymb
\newcommand*{\Z}{\ensuremath{\mathbb{Z}}}  % requires amssymb
  % requires amssymb
\newcommand*{\R}{\ensuremath{\mathbb{R}}}  % requires amssymb
  % requires amssymb

% Abbreviations of latin phrases.
  % abbreviations of latin phrases

% General math macros.
  % set complement
  % empty string
  % equivalence
  % exclusive or
  % string concatenation

% Floors and ceilings

  % requires amsmath
  % requires amsmath
  % requires amsmath

% Redefined symbols from amssymb.
\renewcommand*{\emptyset}{\varnothing}  % rounder than default

\renewcommand*{\iff}{\mathrel{\Leftrightarrow}}  % smaller than default

\renewcommand*{\implies}{\mathrel{\Rightarrow}}  % smaller than default
  % with slanted line under the > sign
  % with slanted line under the < sign

% New commands to make your life easier.

\newtheorem{theorem}{Theorem}
\newtheorem{definition}{Definition}
\newtheorem{remark}{Remark}

\newtheorem{lemma}{Lemma}
\newtheorem{claim}{Claim}
\newtheorem{corollary}{Corollary}
\newtheorem{proposition}{Proposition}
\newtheorem{conjecture}{Conjecture}

\usepackage{amsfonts}
\usepackage{stmaryrd}
\usepackage{yfonts}
\usepackage{cancel}

\begin{document}
\setlength{\parindent}{0cm}
\title{Minimal $2$-Spheres and Optimal Foliations in $3$-Spheres with Arbitrary Metric}
\author{Salim Deaibes}
\date{}
\maketitle
\noindent
\begin{abstract}
    In this paper, we prove that the $3$-sphere endowed with an arbitrary Riemannian metric either contains at least two embedded minimal $2$-spheres or admits an optimal foliation by $2$-spheres. This generalizes recent results by Haslhofer-Ketover (Duke Math. J. 2019), where the existence of optimal foliations and minimal $2$-spheres has been established under the additional assumption that the metric is generic. In light of recent examples by Wang-Zhou, where min-max for some non-bumpy metrics on the 3-sphere produces higher multiplicities, our results are in a certain sense sharp.
\end{abstract}
\section{Introduction}
A classical theorem of the geometry of surfaces is the Lusternik-Schnirelmann theorem pertaining to the existence of closed embedded geodesics in $2$-spheres:
\begin{theorem}[Lusternik-Schnirelmann \cite{lusternik}]\label{lusternik}
Let $(M^2,g)$ be a Riemannian manifold diffeomorphic to $\mathbb{S}^2$. Then, $M^2$ contains at least $3$ closed embedded geodesics.
\end{theorem}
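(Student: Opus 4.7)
The plan is to use the Birkhoff--Lusternik--Schnirelmann min-max method on the length functional. The analytic engine is Birkhoff's curve-shortening procedure: a continuous, length-nonincreasing deformation $\Phi_t$ on the space $\Omega$ of (suitably regular) closed curves in $M^2$, whose fixed points are precisely the constant curves together with the closed geodesics. Given such a deformation, any sweepout class $\sigma$ of positive width
\[
W(\sigma) \;=\; \inf_{\sigma'\sim \sigma}\, \sup_{x}\, L(\sigma'(x))
\]
realizes its width as the length of a closed geodesic by a standard pull-down argument: if there were no critical point at level $W(\sigma)$, one could use $\Phi_t$ to deform a near-minimizing $\sigma'$ strictly below $W(\sigma)$.

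The topological input that drives the count of three is that the space $\Omega$ of unparametrized embedded simple closed curves on $\mathbb{S}^2$, compactified by allowing collapse to constant curves, has the weak homotopy type of $\mathbb{RP}^3$. This can be traced either to Smale's theorem $\operatorname{Diff}(\mathbb{S}^2)\simeq O(3)$ acting transitively on embedded curves, or via the Almgren--Pitts identification of the appropriate cycle space with an Eilenberg--MacLane range. Consequently the $\mathbb{Z}_2$-cohomology ring of $\Omega$ contains a degree-one class $\alpha$ with $\alpha$, $\alpha^2$, $\alpha^3$ all nonzero, producing three nested widths $W_1 \le W_2 \le W_3$, each realized by a closed embedded geodesic via the pull-down construction above.

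The remaining obstacle, and the heart of the classical Lusternik--Schnirelmann argument, is the possibility that two or three of the $W_k$ coincide. Here a cup-length argument shows that if $W_k = W_{k+1}$, then the critical set at the common level has $\mathbb{Z}_2$-cup-length at least $1$, hence cannot consist of isolated geodesics and must contain a continuum, in particular infinitely many. A secondary technical issue, which I would address using monotonicity of the self-intersection number under curve-shortening, is to ensure that min-max limits remain simple and embedded rather than multiply covered or singular, as well as bounded away from constant curves. Taken together, these steps yield at least three distinct embedded closed geodesics on $(M^2, g)$, proving the theorem.
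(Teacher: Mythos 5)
The paper does not actually prove Theorem~\ref{lusternik}; it is quoted as a classical result with a pointer to the literature (\cite{ballmann,grayson,hass,jost,taimanov}), and the surrounding text only describes the proof strategy in one line: ``a combination of variational methods (min-max) and a suitable curve-shortening procedure.'' Your sketch is consistent with that description and with the standard modern account, so there is no conflict with anything in the paper. Two places where your outline is thin, and which are precisely where the historical gaps were: (a) the topological input is usually stated not as ``$\Omega$ has the weak homotopy type of $\mathbb{RP}^3$'' but as a statement about the relative cohomology of the pair (curves, degenerate curves), namely $H^*(\Lambda,\partial\Lambda;\mathbb{Z}_2)\cong\mathbb{Z}_2[\alpha]/(\alpha^4)$, which is the exact dimension-lower analogue of the Hatcher/Smale input $H^*(\mathcal{S},\partial\mathcal{S};\mathbb{Z}_2)=\mathbb{Z}_2[\alpha]/(\alpha^5)$ used in the paper for $\mathbb{S}^3$; asserting an absolute homotopy equivalence to $\mathbb{RP}^3$ needs more care about the boundary. (b) The ``pull-down'' step requires a genuine deformation-retraction/Palais--Smale-type statement for the shortening process, and the preservation of embeddedness under the shortening flow (and the exclusion of multiply covered limit geodesics) is a substantive theorem of Grayson rather than an easy monotonicity remark; this is exactly the part the references \cite{grayson,hass} supply. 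With those two points filled in, your argument matches the standard proof that the paper is alluding to.
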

The original proof had some gaps, which have since been corrected in several independent ways by \cite{ballmann,grayson,hass,jost,taimanov}. All proofs use a combination of variational methods (min-max) and a suitable curve-shortening procedure. The natural question is to what extent Theorem \ref{lusternik} generalizes to $3$-spheres. An outstanding conjecture in this direction is the following:
\begin{conjecture}\label{conjecture}
Let $(M^3,g)$ be a Riemannian manifold diffeomorphic to $\mathbb{S}^3$. Then, $M^3$ contains at least $4$ embedded minimal $2$-spheres.
\end{conjecture}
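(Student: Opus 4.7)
The plan is to adapt the Lusternik--Schnirelmann strategy from geodesics on $S^2$ to minimal $2$-spheres in $S^3$. The proof of Theorem \ref{lusternik} rests on three pillars: (i) a multi-parameter min-max scheme producing critical values realized by closed embedded geodesics, (ii) a curve-shortening flow that deforms curves downward toward these critical points while preserving embeddedness, and (iii) a Lusternik--Schnirelmann-type argument that exploits the topology of the space of unparametrized embedded curves on $S^2$ to distinguish three critical values. I would try to reproduce each ingredient in the three-dimensional setting.

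First, I would set up min-max for embedded $2$-spheres in $(M^3,g)$ in the Simon--Smith framework, applied to $k$-parameter families of sphere sweepouts for $k = 1,2,3,4$. The natural model is the family of round $2$-spheres in the round $\mathbb{S}^3$, which is parametrized (up to degenerations to points) by a $4$-dimensional space; this suggests a $4$-parameter sweepout is the right object and yields candidate widths $\omega_1 \le \omega_2 \le \omega_3 \le \omega_4$, each realized by a smooth embedded minimal $2$-sphere. Next, I would replace curve-shortening by a flow shrinking embedded $2$-spheres while preserving embeddedness -- most plausibly mean curvature flow with surgery combined with the ``optimal foliation'' picture underlying the Haslhofer--Ketover approach -- enabling Morse-theoretic deformation arguments. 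Distinctness of the four minimal spheres would then follow from a Lusternik--Schnirelmann argument: if two consecutive widths coincide, the critical set supports a topologically non-trivial family, yielding additional critical points whose count matches the cup-length (or a similar topological invariant) of the relevant space of sweepouts.

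The hard part, and the reason the conjecture remains open, is the multiplicity phenomenon highlighted by the Wang--Zhou examples mentioned in the abstract. For non-generic metrics, distinct min-max widths can be realized by the \emph{same} underlying minimal sphere counted with different integer multiplicities, so $\omega_1 < \omega_2 < \omega_3 < \omega_4$ does not automatically give four geometrically distinct spheres. Overcoming this requires either a delicate perturbation argument that tracks how minimal spheres persist as one deforms to a bumpy metric and back, or a refined min-max theory imposing multiplicity-one and index bounds in the spirit of Marques--Neves that forces a new embedded minimal sphere at each level. The dichotomy proved in the present paper -- at least two embedded minimal $2$-spheres \emph{or} an optimal foliation -- already indicates how subtle the non-generic case is; extending the count from two to four would demand a substantially stronger multiplicity-control theorem, and that is precisely the missing ingredient I would expect to be the main obstacle.
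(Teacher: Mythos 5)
You chose wisely not to claim a proof: the statement you were handed is Conjecture~\ref{conjecture}, which is open, and the paper does not prove it either. What the paper actually establishes is the much weaker dichotomy of Theorem~\ref{main1} (at least two embedded minimal $2$-spheres, or an optimal foliation), and even that requires a delicate case analysis. Your outline --- view the area functional $\mathcal{A}\colon\mathcal{S}\to\R$ Morse-theoretically, invoke Hatcher's theorem to get $H^*(\mathcal{S},\partial\mathcal{S},\Z_2)=\Z_2[\alpha]/(\alpha^5)$, run $k$-parameter Simon--Smith min-max for $k=1,\dots,4$ to realize four widths, and separate them by a Lusternik--Schnirelmann argument --- is precisely the heuristic the paper records in its own introduction as motivation for the conjecture, so you have reconstructed the ``formal proof'' correctly.

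You have also located the genuine gap correctly and concretely: multiplicity. Nothing forces the stationary integral varifold realizing $\omega_k(M)$ to be a \emph{new} embedded minimal $2$-sphere rather than a lower one taken with integer multiplicity $\ge 2$, and for degenerate metrics this worry is not hypothetical --- the paper cites the Wang--Zhou examples (\cite{zhou1}) where two-parameter min-max on $\mathbb{S}^3$ really does return the first minimal sphere with multiplicity two. The known multiplicity-one theorems (Chodosh--Mantoulidis, Zhou) are in the Allen--Cahn and Almgren--Pitts settings and require generic metrics; no analogue is available in the Simon--Smith setting for arbitrary metrics, which is exactly the ingredient your plan needs and does not have. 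One smaller caution: a ``perturbation to a bumpy metric and back'' argument, which you float as a possible route, runs into the compactness problems that White's degree theory was designed around (his results need $\mathrm{Ric}_g>0$ precisely to guarantee the relevant compactness), so that avenue would need its own substantial new input as well. In short, the sketch is the right sketch, the obstruction you name is the right obstruction, and the statement remains a conjecture.
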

The motivation for this conjecture arises from Morse theory. If $\mathcal{S}$ denotes the space of embedded $2$-spheres in $\mathbb{S}^3$ (together with certain degenerations), then $\mathcal{S}/\partial \mathcal{S}$ is homotopy equivalent to $\R \mathbb{P}^4$ by Hatcher's theorem (Smale's conjecture, c.f. \cite{hatcher,bamler}). Hence, the corresponding relative cohomology ring is given by $H^*(\mathcal{S},\partial \mathcal{S},\Z_2) = \Z_2[\alpha]/(\alpha^5)$. We can then consider the associated area functional $\mathcal{A}:\mathcal{S} \to \R$. The non-trivial critical points of $\mathcal{A}$ are precisely the embedded minimal $2$-spheres in $(\mathbb{S}^3,g)$. Thus, formally applying Morse theory to the area functional, one expects to find at least $4$ embedded minimal $2$-spheres corresponding to the cohomology classes $\alpha$, $\alpha^2$, $\alpha^3$ and $\alpha^4$. 
We note that by a result of White \cite[Theorem 4.5]{white3}, the predicted number $4$ is sharp on certain perturbations of the round sphere.\\

In the 1980s, L. Simon and F. Smith developed a version of Almgren-Pitts min-max theory (now known as Simon-Smith min-max theory) for surfaces which allowed them to control the topology of the limit of minimizing sequences to prove a first result towards Conjecture \ref{conjecture}, namely:
\begin{theorem}[Simon-Smith \cite{smith}]\label{simon-smith}
Let $(M^3,g)$ be a Riemannian manifold diffeomorphic to $\mathbb{S}^3$. Then, $M$ contains at least one embedded minimal $2$-sphere.
\end{theorem}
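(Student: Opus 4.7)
The plan is to run a one-parameter min-max procedure in the Simon-Smith framework using a sweepout of $\mathbb{S}^3$ by $2$-spheres. First I would fix the class $\Lambda$ of continuous one-parameter families $\{\Sigma_t\}_{t\in[0,1]}$ of embedded $2$-spheres (together with the allowed degenerations into points at $t=0,1$) that sweep out $\mathbb{S}^3$, and define the width
\[
W(g) \;=\; \inf_{\{\Sigma_t\}\in\Lambda}\ \sup_{t\in[0,1]}\ \operatorname{Area}_g(\Sigma_t).
\]
The crucial topological input is that any such sweepout represents a nontrivial element of $\pi_1(\mathcal{S},\partial\mathcal{S})$; combined with the isoperimetric inequality on small balls (which forces any sweepout to pass through a slice of definite area), this yields $W(g)>0$, so the min-max problem is non-degenerate.

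Next I would perform the standard pull-tight construction on a minimizing sequence of sweepouts $\{\Sigma_t^i\}$ with $\sup_t \operatorname{Area}(\Sigma_t^i) \to W$, producing a new minimizing sequence whose almost-maximal slices are almost stationary. Extracting a subsequence and choosing $t_i$ with $\operatorname{Area}(\Sigma_{t_i}^i)\to W$, one obtains a limit varifold $V$ of mass $W$ which is stationary in $\mathbb{S}^3$. So far this is just the varifold min-max procedure of Almgren-Pitts adapted to the smooth sweepout setting.

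The main work, and the point of the Simon-Smith refinement, is to upgrade $V$ to a smooth embedded minimal surface while retaining the genus bound. Here I would invoke the combinatorial replacement argument: by a careful interpolation within the class $\Lambda$ (replacing $\Sigma_{t_i}^i$ on small balls by area-minimizers with the same boundary while keeping the slices embedded $2$-spheres), one can choose the minimizing sequence so that its almost-maximal slices are almost-minimizing on small annuli in the sense of Pitts. The Pitts regularity theorem then upgrades $V$ to a smooth, embedded minimal surface $\Sigma$ of area $W$ (counted with integer multiplicity).

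Finally I would apply the genus-bound argument specific to Simon-Smith theory: because every slice of every sweepout is a union of embedded $2$-spheres, the replacement procedure preserves this property, and Meeks-Simon-Yau-style lifespan arguments together with a topological continuity argument show that each connected component of $\Sigma$ (ignoring multiplicity) is again a $2$-sphere. In particular $\Sigma$ contains at least one embedded minimal $2$-sphere, proving Theorem \ref{simon-smith}. The main obstacle is this last step, namely controlling the topology of the varifold limit under multiplicity and handle degenerations; this is exactly where the Simon-Smith framework diverges from Almgren-Pitts and requires the careful local replacement by disk-minimizers rather than general area-minimizers.
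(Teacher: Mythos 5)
The paper does not prove Theorem~\ref{simon-smith}: it is cited as a black-box result from Smith's thesis \cite{smith} and used throughout without reproving it. So there is no ``paper's own proof'' to compare against. That said, your outline is a faithful account of the standard Simon--Smith argument, and the main steps are all in place: defining the one-parameter width over sweepouts by embedded $2$-spheres, establishing non-degeneracy of the width (via the isoperimetric inequality and the topological non-triviality of the sweepout class, which in the paper's framing is the statement that the sweepout detects the generator $\alpha$ of $H^1(\mathcal{S},\partial\mathcal{S};\Z_2)$), performing pull-tight, upgrading to an almost-minimizing minimizing sequence and invoking Pitts-style regularity, and finally controlling the topology of the limit via the Meeks--Simon--Yau-type replacement argument so that the components remain $2$-spheres. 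Two small cautions: the non-degeneracy $W(g)>0$ is usually phrased not via $\pi_1(\mathcal{S},\partial\mathcal{S})$ directly but via the coarea formula (a sweepout must pass through a slice bisecting the volume, which the isoperimetric inequality forces to have definite area); and the genus/topology control is genuinely the delicate technical heart of the Simon--Smith refinement, so ``a topological continuity argument'' is doing a lot of unstated work there. As a sketch of the route, though, this is the correct approach and matches the argument in \cite{smith} (see also the Colding--De~Lellis exposition of Simon--Smith min-max) rather than anything carried out in the present paper.
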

The major difficulty in finding more than one solution is the phenomenon of multiplicity in min-max theory. Namely, the potential danger is that $k-$parameter min-max for the cohomology class $\alpha^k$ for $k=2,3,4$ may simply produce the same $2$-sphere, just with higher integer multiplicities.\\
We note that the multiplicity one conjecture for generic metrics has been proved in the Allen-Cahn and Almgren-Pitts setting in recent breakthroughs by Chodosh-Mantoulidis \cite{chodosh} and Zhou \cite{zhou}. However, establishing multiplicity one in the Simon-Smith setting, as well as in certain nongeneric situations, remains a major open problem.\\
Using degree theory, B. White improved the result of Simon-Smith under the additional assumption that the manifold has positive Ricci curvature (this curvature assumption is made to guarantee desirable compactness properties needed for degree theory), see \cite{white3}. It is clear, however, that $Ric_g>0$ is a restrictive assumption on metrics. Thus, the natural generalization of White's theorem is one which holds for 'almost all' Riemannian metrics (in some suitable sense). This is precisely what Haslhofer-Ketover proved:
\begin{theorem}[Haslhofer-Ketover \cite{haslhofer}]\label{haslhofer-ketover}
Let $(M^3,g)$ be a Riemannian manifold diffeomorphic to $\mathbb{S}^3$ endowed with a bumpy metric. Then, $M^3$ contains at least $2$ embedded minimal $2$-spheres. More precisely, exactly one of the following holds:
\begin{enumerate}
    \item $M$ contains at least one stable embedded minimal $2$-sphere, and at least two unstable embedded minimal $2$-spheres.
    \item $M$ contains no stable embedded minimal $2$-spheres and contains at least two unstable embedded minimal $2$-spheres.
\end{enumerate}
\end{theorem}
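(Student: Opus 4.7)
The plan is to run Simon-Smith min-max at the first two cohomological levels, exploiting the cup-length of the ring $H^*(\mathcal{S},\partial\mathcal{S};\Z_2)=\Z_2[\alpha]/(\alpha^5)$ recalled in the introduction. This should produce two distinct unstable embedded minimal $2$-spheres; the stable/unstable dichotomy is then the presence or absence of an additional stable one.

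First, for $k\in\{1,2\}$ I would define the $k$-width
\[
W_k \;=\; \inf_{\Phi\in\Pi_k}\;\sup_{x\in I^k}\,\mathcal{A}(\Phi(x)),
\]
where $\Pi_k$ denotes a suitable class of continuous sweep-outs of $M^3$ by embedded $2$-spheres detecting $\alpha^k$. By Simon-Smith min-max with Morse-index control (in the spirit of Marques-Neves and Ketover), each $W_k>0$ is attained by a stationary integral varifold $V_k$ whose support is a smooth closed embedded minimal $2$-sphere $\Sigma_k$ with $\mathrm{index}(\Sigma_k)\le k$; the bumpy hypothesis then forces nullity $0$ and gives finiteness of the set of embedded minimal $2$-spheres with area at most any fixed constant.

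Next I would show strict separation $W_1<W_2$. The Lusternik-Schnirelmann principle adapted to this setting says that $W_1=W_2$ forces the critical set at the common level to detect $\alpha$ non-trivially, and therefore to contain infinitely many distinct embedded minimal $2$-spheres; bumpy rules this out, yielding $W_1<W_2$. Because each $W_k$ is a genuine saddle value of $\mathcal{A}$ along a sweep-out carrying a non-trivial cohomology class, standard index lower bounds give $\mathrm{index}(\Sigma_k)\ge 1$, so both $\Sigma_1$ and $\Sigma_2$ are unstable.

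The \textbf{main obstacle} is the multiplicity issue, since in the Simon-Smith setting multiplicity one is not a theorem (in contrast with the Almgren-Pitts result of Zhou). A priori one could have $V_2=m\Sigma'$ with $m\ge 2$, and in particular $\Sigma'=\Sigma_1$ and $W_2=mW_1$. To rule this out I would invoke the Ketover-Marques-Neves catenoid estimate: bridging two nearby parallel copies of $\Sigma_1$ through thin catenoidal necks produces a $2$-parameter competitor sweep-out whose maximum area is strictly below $mW_1$, contradicting the definition of $W_2$. Here the spherical topology of $\Sigma_1$ is essential, both for the neck construction to preserve the sweep-out class and for the ambient isotopy to glue nicely with the $1$-parameter sweep-out of $M^3$. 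Combined with bumpy (non-degeneracy of $\Sigma_1$), this forces multiplicity one in $V_2$ and, together with $W_1<W_2$, gives $\Sigma_1\neq\Sigma_2$ as subsets of $M^3$.

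Finally, Steps above yield two distinct unstable embedded minimal $2$-spheres regardless of additional structure of $(M^3,g)$. The trichotomy in the statement is then tautological: either an additional stable embedded minimal $2$-sphere exists in $M^3$, giving case (1), or none does, giving case (2), and these alternatives are mutually exclusive. I expect the catenoid-neck step to be by far the hardest part, both because it requires carefully controlling the index along the deformation and because it is precisely the step that cannot be imported from the generic Almgren-Pitts theory.
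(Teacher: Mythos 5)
The central gap is precisely the step you flag as hardest, and it is not merely technical. The Ketover--Marques--Neves catenoid estimate does not by itself bound $W_2$: what it gives is a local area estimate for the surface obtained by joining two nearby parallel leaves with a thin neck, and to convert this into $W_2<2W_1$ one must first exhibit a genuine $2$-parameter family in $\mathcal{S}$ detecting $\alpha^2$ whose slices are built from a \emph{global} one-parameter sweepout $\{\hat\Sigma_t\}_{t\in[-1,1]}$ of $M^3$ with $\hat\Sigma_0=\Sigma_1$ and $|\hat\Sigma_t|<|\Sigma_1|$ for $t\ne 0$, i.e. an optimal foliation. Constructing this foliation is where essentially all the work in the Haslhofer--Ketover proof lies: they run mean curvature flow with surgery out of the expanding tubular neighbourhood of $\Sigma_1$, and that flow can (and in general does) get stuck at stable minimal $2$-spheres. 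This is exactly why their proof branches on whether $M^3$ contains a stable minimal $2$-sphere. When such a sphere $\Sigma$ exists they abandon the foliation/catenoid route altogether and instead run one-parameter min-max inside the two $3$-disks bounded by $\Sigma$, obtaining an interior (hence new, unstable) minimal $2$-sphere in each; the catenoid estimate never enters in that case. Your proposal never builds the global foliation, never addresses whether stable spheres obstruct it, and yet applies the catenoid estimate unconditionally; consequently, the multiplicity step is unjustified, and in the very situation (1) of the statement the argument breaks down. Declaring the dichotomy ``tautological'' at the end does not repair this, because without the multiplicity step you have not even produced the two unstable spheres in case (1).

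There is a secondary gap in your treatment of $W_1$: the Simon--Smith min-max theorem yields only a stationary integral varifold $V_1=\sum_j m_j\Sigma_j$, which a priori may have several disjoint components, some of which may be stable, and need not be a single embedded sphere with multiplicity one. Identifying $W_1$ with the area of a single multiplicity-one minimal $2$-sphere (and showing it coincides with $\inf_{\Sigma'\in\mathcal{S}_{\min}}|\Sigma'|$) is the content of \cite[Lemma 3.2]{haslhofer}, and its proof again uses the optimal foliation (together with the fact that, absent stable spheres, any two embedded minimal $2$-spheres must intersect). So the piece of structure you treat as a starting point is itself one of the main outputs of the approach you are trying to shortcut.
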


We recall that a metric $g$ is called $\emph{bumpy}$ if no immersed minimal hypersurfaces admit non-trivial Jacobi fields (i.e. functions which lie in the kernel of the stability operator).\\ A theorem of White (\cite[Theorem 2.2]{white4}) states that bumpy metrics are generic in the sense of Baire. The bumpiness assumption on a Riemannian metric is often needed for Morse-theoretic arguments. However, for many metrics that one encounters in practice, the bumpiness assumption either does not hold (e.g. for metrics with symmetries) or is unfeasible to check.\\

The proof by Haslhofer-Ketover combines techniques from min-max theory and mean curvature flow. More precisely (as reviewed in more detail below) in the difficult case when there is no stable minimal 2-sphere they used mean curvature flow with surgery to produce an optimal foliation by $2$-spheres:

\begin{theorem}[Haslhofer-Ketover \cite{haslhofer}]
Let $(M^3,g)$ be a Riemannian manifold diffeomorphic to $\mathbb{S}^3$ that does not contain any stable minimal 2-sphere. Then, $M^3$ contains admits an optimal foliation by $2$-spheres.
\end{theorem}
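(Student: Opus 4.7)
The plan is to apply the Haslhofer-Kleiner mean curvature flow with surgery (MCF/S) to a minimizing sequence of sweepouts, yielding an optimal foliation in the limit. Let $W = W(M,g)$ denote the Simon-Smith width: the infimum, over all sweepouts $\{\Sigma_t\}_{t \in [-1,1]}$ of $M^3$ by embedded $2$-spheres (with $\Sigma_{\pm 1}$ points), of $\max_t \mathcal{H}^2(\Sigma_t)$. An optimal foliation is then a sweepout which is also a smooth foliation and which achieves $\max_t \mathcal{H}^2(\Sigma_t)=W$. The crucial preliminary fact is that under the hypothesis of no stable minimal $2$-spheres, MCF/S of every embedded $2$-sphere in $M^3$ becomes extinct in finite time. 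Indeed, MCF/S is area non-increasing (smooth MCF strictly decreases area, and each neck surgery replaces a thin $\varepsilon$-neck with two spherical caps of strictly smaller total area), so a non-extinct flow would subsequentially converge to a minimal $2$-sphere; such a limit of a gradient flow would be stable, contradicting the hypothesis.

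Given extinction, I would first use Simon-Smith theory (Theorem~\ref{simon-smith}) to produce a min-max minimal $2$-sphere $\Sigma^{*}$ of area $W$, which by an elementary topological argument separates $M^3$ into two closed balls $B_{\pm}$. Starting from slight inward perturbations $\Sigma^{\pm}$ of $\Sigma^{*}$ into each ball, I would run MCF/S until extinction at a point in each side. Since MCF/S is area non-increasing, the area of the evolving slices never exceeds $W$, so concatenating the two flows at $\Sigma^{*}$ yields a sweepout $\{\widetilde{\Sigma}_t\}_{t \in [-1,1]}$ of $M^3$ with $\max_t \mathcal{H}^2(\widetilde{\Sigma}_t) \le W$, and hence equal to $W$ by definition of the width. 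To upgrade the construction to a minimizing \emph{sequence} of sweepouts (needed so the limit is genuinely a foliation rather than a possibly degenerate sweepout), I would apply the same MCF/S regularization to a general minimizing sequence of sweepouts with $\max_t \mathcal{H}^2(\Sigma^i_t) \to W$, producing MCF/S-adapted sweepouts whose max area is correspondingly controlled; in the limit, one obtains the desired foliation.

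The principal obstacle is that MCF/S is \emph{discontinuous} in the initial data at surgery times: an arbitrarily small perturbation of the initial surface can alter the number or location of the necks at which surgery is performed, so that the construction above a priori only yields a Simon-Smith-type sweepout with degenerations rather than a genuine foliation. Upgrading to a foliation requires an explicit continuous interpolation across each surgery event. The device for this is the near-cylindrical geometry of Haslhofer-Kleiner necks: at each surgery one can slide a one-parameter family of parallel disks through the $\varepsilon$-neck to transition from the pre-surgery configuration to the post-surgery configuration, all while keeping the area change arbitrarily small. Assembling these local interpolations into a globally coherent family as the sweepout parameter varies, controlling the accumulation of surgeries (which is uniformly bounded in terms of the initial area), and showing that the resulting family converges in the limit to an embedded smooth foliation — using the uniform regularity inherited from MCF/S away from necks and the no-stable-minimal-$2$-sphere hypothesis to rule out a positive-area limit leaf — constitutes the technical heart of the argument.
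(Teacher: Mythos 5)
Your proposal captures the same overall framework as the Haslhofer--Ketover argument that the paper reviews and adapts: produce a Simon--Smith min-max minimal $2$-sphere $\Sigma$, observe it is unstable, push it off to either side, run mean curvature flow with surgery to foliate the two resulting $3$-disks, and interpolate continuously across the surgery events. However, there are two substantive gaps. First, MCF with surgery requires strictly mean-convex (for surfaces in $3$-manifolds, equivalently $2$-convex) initial data, and a generic ``slight inward perturbation'' of $\Sigma$ need not satisfy this. The correct construction, used in the paper's Proposition \ref{strictly stable} and in the decomposition $M = D^- \cup N_\delta(\Sigma) \cup D^+$ of Claim \ref{expanding claim}, perturbs $\Sigma$ along its positive lowest eigenfunction $\phi_0$ of $L_\Sigma$; since the lowest eigenvalue $\lambda$ is negative by instability, the expansion $H_{\Sigma_t} = -t\lambda\phi_0 + \mathcal{O}(t^2)$ shows that for small $|t|>0$ the perturbed spheres are strictly mean-convex with mean curvature vector pointing away from $\Sigma$, giving valid initial data for the flow with mean-convex boundary for $D^\pm$. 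Second, your assertion that $\Sigma$ has area exactly $W=\omega_1(M)$ silently assumes multiplicity one, which is not automatic: Simon--Smith min-max produces a varifold $m\Sigma$ of mass $\omega_1(M)$, hence only $|\Sigma| = \omega_1(M)/m \le \omega_1(M)$. Ruling out $m\ge 2$ is precisely the content of Lemma \ref{infimum} (Haslhofer--Ketover's Lemma 3.2), which shows $\omega_1(M) = \gamma(M)$, the infimum of areas of embedded minimal $2$-spheres; without it, property (iii) of Definition \ref{optimal foliation} --- that the central leaf realizes the $1$-width --- is not established, and the constructed foliation has no claim to being optimal.

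Two smaller points. The ``upgrade to a minimizing sequence of sweepouts'' paragraph is a detour: one runs MCF with surgery from the two mean-convex perturbed spheres directly, and the smooth foliation structure comes entirely from the surgery-interpolation argument you correctly identify as the technical heart (together with the dichotomy of \cite[Theorem 1.8]{haslhofer}: either a smooth mean-convex foliation of $D^\pm$ exists, or $D^\pm$ contains a stable minimal $2$-sphere, the latter excluded by hypothesis). And your extinction argument is sound in conclusion but not in stated mechanism: the stability of a hypothetical non-extinct limit comes from the one-sided barrier foliation by strictly mean-convex spheres that the flow supplies (yielding a contracting half-neighbourhood and hence a nonnegative lowest eigenvalue of $L$), not from an abstract ``limit of a gradient flow is stable'' principle, which is false in general --- gradient flows can limit onto unstable critical points along unstable manifolds.
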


Here, an optimal foliation is, roughly speaking, a foliation by smooth embedded $2$-spheres that contains a central $2$-sphere realizing the one-parameter min-max width and such that all other leaves of the foliation have strictly smaller area, see \cite[Section 3]{haslhofer} for details. Haslhofer-Ketover then used their optimal foliation in combination with ideas from min-max theory, in particular the catenoid estimate \cite{ketover}, to produce a second embedded minimal $2$-sphere.\\
We remark that optimal foliations (of $3$-spheres or $3$-disks) are also of independent interest in other geometric problems, in particular the inverse problem for the area functional studied by Alexakis-Balehowsky-Nachman \cite{alexakis}, and the waist and Urysohn inequalities proved by Liokumovich-Maximo \cite{maximo} and Wang-Zhu \cite{wang}.\\

In the present paper, we investigate to what extent the approach by Haslhofer-Ketover generalizes to arbitrary Riemannian metrics without bumpyness assumption. Our main result shows that even without bumpyness assumption we always get \emph{either} the existence of at least two embedded minimal $2$-spheres \emph{or} the existence of an optimal foliation by $2$-spheres:

\begin{theorem}[Existence Theorem]\label{main1}
Let $(M^3,g)$ be a Riemannian manifold diffeomorphic to $\mathbb{S}^3$. Then, $M$ contains at least two embedded minimal $2$-spheres or admits an optimal foliation by $2$-spheres. More precisely, either $(M^3,g)$
\begin{enumerate}[(1)]
    \item contains a strictly stable embedded minimal $2$-sphere, in which case $M$ contains at least three embedded minimal $2$-spheres,
    \item or contains a degenerate stable embedded minimal $2$-sphere $\Sigma$. In this case, either
    \begin{enumerate}[(a)]
        \item $M$ contains infinitely-many embedded minimal $2$-spheres,
        \item or $\Sigma$ admits either
        \begin{enumerate}[(i)]
            \item a contracting neighbourhood, in which case $M$ contains at least three embedded minimal $2$-spheres,
            \item or a mixed neighbourhood, in which case $M$ contains at least two embedded minimal $2$-spheres,
            \item\label{bad} or an expanding neighbourhood, in which case $M$ admits an optimal foliation by $2$-spheres.
        \end{enumerate}
    \end{enumerate}
    \item or contains no stable embedded minimal $2$-spheres, in which case there exist at least two unstable embedded minimal $2$-spheres and an optimal foliation by $2$-spheres.
\end{enumerate}
\end{theorem}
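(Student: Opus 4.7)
I would trichotomise according to whether $(M^3,g)$ admits no stable embedded minimal $2$-sphere, a strictly stable one, or only degenerate stable ones, and in the degenerate regime refine further by the higher-order area behaviour near a chosen stable sphere $\Sigma$.

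\textbf{Cases (3) and (1).} Case (3) is immediate from the second Haslhofer--Ketover theorem recalled in the excerpt, which produces the optimal foliation without any bumpyness hypothesis; the two unstable minimal $2$-spheres then arise from one- and two-parameter Simon--Smith min-max against the central leaf, the catenoid estimate of \cite{ketover} ruling out the multiplicity-two degeneration at the two-parameter stage. In Case (1), the complement of $\Sigma$ splits into two $3$-balls $D_+ \sqcup D_-$, and strict stability supplies an area-decreasing principal Jacobi perturbation on each side that, via a standard barrier argument, forces the one-parameter Simon--Smith width in $D_\pm$ to be bounded away from $|\Sigma|$; the resulting min-max minimal $2$-spheres $\Sigma_\pm$ therefore differ from $\Sigma$ on area grounds, yielding three pairwise distinct minimal $2$-spheres $\Sigma, \Sigma_+, \Sigma_-$.

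\textbf{Case (2): degenerate stable $\Sigma$.} If some sequence of distinct embedded minimal $2$-spheres accumulates to $\Sigma$, case (2a) is automatic, so assume $\Sigma$ is isolated. I would fix a two-sided tubular neighbourhood $U \cong \Sigma \times (-\delta,\delta)$ disjoint from every other embedded minimal $2$-sphere, and declare a side of $U$ \emph{area-decreasing} if there exists a smooth family of embedded normal graphs over $\Sigma$ on that side with strictly decreasing area; because $\Sigma$ has nontrivial Jacobi fields, this is governed by the higher-order terms of the area functional along the kernel of the Jacobi operator, hence is an intrinsic invariant of $(\Sigma,g)$ inside $U$. Call $U$ \emph{expanding}, \emph{mixed}, or \emph{contracting} according as both, exactly one, or neither side is area-decreasing. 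In subcases (2b)(i) and (2b)(ii), each non-area-decreasing side is a strict higher-order local minimum of $\Sigma$'s area, so the Case (1) barrier argument applied to Simon--Smith min-max in the corresponding $3$-disk produces a fresh minimal $2$-sphere; summing over the one or two such sides yields the claimed tallies of two and three spheres respectively.

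\textbf{Subcase (2b)(iii): the main obstacle.} In the expanding case, both sides of $\Sigma$ admit area-decreasing one-parameter families, which extend $\Sigma$ to a local optimal foliation with $\Sigma$ central. The task is to enlarge this local foliation to a globally defined optimal foliation of $M$ with $\Sigma$ as central leaf. My plan is to adapt Haslhofer--Ketover's mean-curvature-flow-with-surgery construction to this non-bumpy setting: starting from the outer leaves of the local area-decreasing foliation on each side, run the surgery flow outward until it sweeps out the rest of $M$, then glue across $\Sigma$. The principal technical difficulty, absent in the bumpy case, is that without bumpyness the surgery flow can a priori encounter further (degenerate) stable minimal $2$-spheres or bubble off a high-area component, either of which would destroy optimality. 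Controlling this requires combining the strict area gap coming from the expanding assumption with the isolation of $\Sigma$ among embedded minimal $2$-spheres, and analysing carefully how the flow negotiates any additional degenerate stable spheres it meets; verifying that the glued foliation is genuinely optimal --- in the sense that every non-central leaf has area strictly smaller than $|\Sigma|$ --- is the essential new geometric content of this subcase.
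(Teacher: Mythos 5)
Your overall trichotomy and your reading of the main obstacle in subcase (2b)(iii) are both on target, but the proposal has two substantial gaps and one sign confusion.

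\textbf{Sign confusion in Case (1).} If $\Sigma$ is strictly stable, the lowest eigenfunction perturbation is area-\emph{increasing} to second order (the second variation is $\lambda\int\phi^2>0$), and the resulting tubular neighbourhood is \emph{contracting} (mean curvature vector points toward $\Sigma$). You call this an ``area-decreasing principal Jacobi perturbation,'' which has the sign backwards. This matters: it is precisely the fact that nearby graphs have \emph{larger} area that forces the one-parameter width of each disk $D^\pm$ strictly above $|\Sigma|$; an area-decreasing perturbation would give no such bound. The paper makes this quantitative via a Marques--Neves squeezing map (Theorem~\ref{squeezing}) feeding into the Ketover--Liokumovich--Song min-max theorem for manifolds with minimal boundary; your ``standard barrier argument'' is a placeholder for exactly that technical step, which is not standard and has to be supplied.

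\textbf{Gap in Case (2): the trichotomy needs a construction, not a definition.} You declare the contracting/mixed/expanding trichotomy by asking whether each side of a tubular neighbourhood supports an area-decreasing family of graphs, and assert this is ``governed by the higher-order terms of the area functional along the kernel.'' That does not by itself produce either the dichotomy of Theorem~\ref{song} or the monotone-mean-curvature foliation that Definition~\ref{geo nbd} actually requires. The paper proves the trichotomy by an implicit function theorem argument adapted from Song: using simplicity of the lowest eigenvalue (Lemma~\ref{std lemma}), one solves $H_{t\phi_0+v_t}=c_t$ to get a one-parameter CMC foliation near $\Sigma$, and then either $c(t)$ has $0$ as a non-isolated zero (infinitely many minimal spheres) or the sign of $c(t)$ on each side classifies the neighbourhood. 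The sign of the mean curvature along the foliation, not just the existence of an area-decreasing path, is what the subsequent squeezing-map and mean-curvature-flow arguments consume. Without this construction the rest of Case (2) does not get off the ground.

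\textbf{Gap in subcase (2b)(iii): you name the difficulty but do not resolve it, and you omit the $\omega_1$ requirement.} You correctly observe that the surgery flow may encounter further degenerate stable spheres, but leave the resolution as ``analyse carefully.'' The paper's resolution is a clean dichotomy: by Haslhofer--Ketover's surgery theorem, either a stable minimal $2$-sphere appears in $\mathrm{Int}(D^\pm)$ (in which case one already has at least two embedded minimal $2$-spheres and the theorem holds without any foliation), or the flow produces genuine smooth foliations of $D^\pm$. Moreover, an optimal foliation must have its central leaf realize $\omega_1(M)$ (property (iii) of Definition~\ref{optimal foliation}), a constraint your proposal never addresses. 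The paper establishes this via the modified Lemmas~\ref{intersect} and~\ref{infimum}: one may, at this stage of the argument, assume $M$ has only finitely many embedded minimal $2$-spheres and no stable sphere with a contracting or mixed neighbourhood (otherwise earlier cases apply), and under these assumptions any two embedded minimal $2$-spheres intersect and $\gamma(M)=\omega_1(M)$ with a multiplicity-one realizer. This logical bootstrapping --- using the conclusions of the other branches to rule out pathologies in the expanding branch --- is the essential new content you would need to supply.
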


\begin{remark}\label{intro}

We note that in Theorem \ref{main1}, we obtain the existence of at least two embedded minimal $2$-spheres in every case except for case \ref{bad}. In the nondegenerate case,  the optimal foliation $\{\Sigma_t\}$ satisfies:
\begin{align}\label{estimate}
    |\Sigma_t| < |\Sigma|-ct^2,
\end{align}
where $t \sim dist(\Sigma,\Sigma_t)$. The quadratic gain wins against the error term in the catenoid estimate (see \cite{ketover}), the latter being proportional to $t^2/\log t$. In the degenerate case, however, estimate (\ref{estimate}) need not hold (e.g. one could have that $|\Sigma_t| \sim |\Sigma|-ct^4$) - in particular, the error term in the catenoid estimate could win against the polynomial gain. In fact, Wang-Zhou (see \cite{zhou1}) recently showed that for certain degenerate metrics on $\mathbb{S}^3$, two-parameter min-max does indeed produce the first solution with multiplicity two.
\end{remark}

\subsection*{Review of the Haslhofer-Ketover Approach}
Before outlining our proof of Theorem \ref{main1}, we recall the main ideas of the proof of Theorem \ref{haslhofer-ketover} from \cite{haslhofer}. The authors distinguish two cases:
\begin{itemize}
    \item $M^3$ contains a stable embedded minimal $2$-sphere,
    \item $M^3$ does not contain a stable embedded minimal $2$-sphere.
\end{itemize}
In the first case, by performing a $1$-parameter min-max procedure in the two $3$-disks bounded by the stable (and thus strictly stable by bumpiness) minimal $2$-sphere, the authors obtain the existence of an embedded minimal $2$-sphere in the interior of each $3$-disk: in this case, the $3$-sphere contains at least $3$ embedded minimal $2$-spheres and the theorem is proven.\\
\newline
In the case that there does not exist a stable embedded minimal $2$-sphere, the proof is more delicate. The Simon-Smith existence theorem (Theorem \ref{simon-smith}) produces an embedded minimal $2$-sphere which is necessarily unstable: denote this sphere by $\Sigma$. By using the lowest eigenfunction of the stability operator $L_{\Sigma}$ of $\Sigma$, the manifold can be decomposed into $M=D^- \cup N(\Sigma) \cup D^+$, where $D^{\pm}$ are smooth $3$-disks with mean-convex boundary and $N(\Sigma)$ is a tubular neighbourhood of $\Sigma$ which is foliated by smooth $2$-spheres with mean-curvature vector pointing away from $\Sigma$.\\
Applying the theory of mean curvature flow with surgery, the authors show that there exist smooth foliations of $D^{\pm}$ by mean-convex embedded $2$-spheres. Using these foliations, one can then form an optimal foliation $\{\Sigma_t\}_{-1 \leq t \leq 1}$ of $M^3$ by $2$-spheres such that $\Sigma_0 = \Sigma$ and $|\Sigma_t| < |\Sigma_0|$ for $t \neq 0$.\\
The authors then construct a two-parameter family $\{\Sigma_{s,t}\}$, where, roughly speaking, $\Sigma_{s,t} = \Sigma_s \# \Sigma_t$ (the surface consisting of $\Sigma_s$ and $\Sigma_t$ connected by a thin neck). Using the catenoid estimate of Ketover-Marques-Neves \cite{ketover}, they then show that 
\begin{align}
    \sup_{s,t}|\Sigma_{s,t}| < 2|\Sigma|,
\end{align}
which guarantees that the minimal surface obtained by a $2$-parameter min-max procedure is not $\Sigma$ with multiplicity $2$. Finally, by a Lusternik-Schnirelmann argument, $\sup_{s,t}|\Sigma_{s,t}|$ is strictly bounded below by $|\Sigma|$, i.e. the min-max surface is also not $\Sigma$ - it must thus be a new embedded minimal $2$-sphere. \\ 
\subsection*{Outline of Our Proof}
We now outline our proof of Theorem \ref{main1}. Unlike Theorem \ref{haslhofer-ketover} which only had two cases to consider, we now have to consider the following three scenarios: 
\begin{itemize}
    \item $M^3$ contains a degenerate stable embedded minimal $2$-sphere but no strictly stable embedded minimal $2$-spheres,
    \item $M^3$ contains a strictly stable embedded minimal $2$-sphere,
    \item $M^3$ does not contain a stable embedded minimal $2$-sphere.
\end{itemize}
As we will seee, the main new difficulty is the scenario of $M$ containing a degenerate stable embedded minimal $2$-sphere but no strictly stable embedded minimal $2$-spheres.\\

Adapting a lemma from A. Song's proof of the Yau conjecture in \cite{song} to our setting, we show that if $\Sigma$ is a degenerate stable embedded minimal $2$-sphere, then either there exist infinitely-many embedded minimal $2$-spheres or $\Sigma$ admits a tubular neighbourhood which is either expanding, contracting or mixed (which will be made precise later on).\\ 

In the expanding case (i.e. $\Sigma$ has a tubular neighbourhood foliated by $2$-spheres with mean-curvature vector pointing away from $\Sigma$), we use the optimal foliation approach reviewed above. However, in order to emulate the optimal foliation argument of Haslhofer-Ketover, there are some subtleties that we have to address. Unlike in \cite{haslhofer}, if $\Sigma$ is not strictly stable, then it is not necessarily unstable (since the metric need not be bumpy). Thus, we prove a modified version of \cite[Lemma 3.2]{haslhofer}, which establishes the existence of an embedded minimal $2$-sphere with multiplicity $1$ realizing the $1$-width (i.e. the width associated to $1$-parameter sweepouts of $M^3$ by $2$-spheres) under the assumption that $M^3$ contains no stable embedded minimal $2$-spheres with contracting or mixed neighbourhoods. Using this, one can argue that the manifold admits an optimal foliation.\\ \newline
In the contracting case (i.e. $\Sigma$ has a tubular neighbourhood foliated by $2$-spheres whose mean-curvature vector points towards $\Sigma$), $M$ can be decomposed into $M=S^- \cup \Sigma \cup S^+$, where $S^{\pm}$ are smooth $3$-balls such that their closures have minimal boundary. In order to prove our theorem in this case, we seek to apply the Ketover-Liokumovich-Song min-max theorem \cite[Theorem 10]{lio} to each of the $3$-disks with boundary $\Sigma$, which is well-suited to min-max on compact manifolds with minimal boundary. To this end, we show that $\Sigma$ admits a Marques-Neves squeezing map: this ensures that some slice of any sweepout of $M$ by $2$-spheres starting at $\Sigma$ must have area greater than that of $\Sigma$. This, in turn, guarantees that the minimizing sequence produced converges to a minimal surface which lies in the interior of the $3$-disk and not just on the boundary (in particular, this surface cannot be $\Sigma$). In this case, we obtain at least $3$ embedded minimal $2$-spheres.\\ 

In the mixed case (i.e. $\Sigma$ has a tubular neighbourhood which has a contracting half and an expanding half), we apply the Ketover-Liokumovich-Song min-max theorem \cite[Theorem 10]{lio} to the contracting half to obtain a second embedded minimal $2$-sphere. \\ \newline

Finally, we observe (by means of a standard proposition) that the strictly stable case and unstable case are also covered by the argument outlined above. In the unstable case, however, one can use the optimal foliation argument from Haslhofer-Ketover to obtain a second embedded minimal $2$-sphere via the catenoid estimate.\\ \newline

This article is organized into two main sections: in Section 2, we recall the definitions pertaining to stability and discuss geometric neighbourhoods of degenerate stable minimal spheres. In Section 3, we give the proof of our main theorem (Theorem \ref{main1}).\\
\section*{Acknowledgments} I would like to deeply thank my supervisor Professor Robert Haslhofer
for suggesting this topic and for his support and patience throughout its completion. I would also like to thank Professor Yevgeny Liokumovich for several enlightening discussions.\\

\bigskip

\section{Stability and Neighbourhoods of Minimal $2$-Spheres}
This section is devoted to discussing stability of minimal surfaces and neighbourhoods of embedded minimal $2$-spheres with desirable geometric properties.\\
\begin{definition}[Stability Operator, Stability and Degeneracy]\label{stability}
Let $\Sigma$ be an orientable hypersurface embedded in a Riemannian manifold $(M,g)$ with $\nu$ a choice of unit normal. The \emph{stability operator} of $\Sigma$ is defined by the formula
\begin{align}
    L_{\Sigma} = -\Delta_{\Sigma} - |A_{\Sigma}|^2-Ric(\nu,\nu),
\end{align}
where $\Delta_{\Sigma}$, $A_{\Sigma}$ are the Laplace-Beltrami operator and the second fundamental form of $\Sigma$, respectively.\\
A minimal hypersurface $\Sigma$ is \emph{stable} if
\begin{align}
    \int_{\Sigma}\phi L_{\Sigma}\phi \geq 0, \text{ } \forall \phi \in C^{\infty}(\Sigma,\R).
\end{align}
$\Sigma$ is called strictly stable if the above quantity is strictly positive for all $\phi \neq 0$.\\
$\Sigma$ is said to be \emph{degenerate} stable if $ker(L_{\Sigma}) \neq \{0\}$.\\
We say that $\Sigma$ is unstable if it is not stable.\\
\end{definition}
We always assume that $\Sigma$ has empty boundary. For any smooth function $\phi: \Sigma \to \R$, we consider the family of surfaces given by 
\begin{align}
    \Sigma_t^{\phi} = \{\exp_x(t\phi(x)\nu(x)):x \in \Sigma\},
\end{align}
where $t$ lies in a sufficiently small time interval.\\ We now recall the first and second variation of area formulas (see, e.g. \cite{white})
\begin{proposition}[Variations of Area]\label{variation of area}
The first variation of area formula is
\begin{align}
    \frac{d}{dt}\biggr\rvert_{t=0}|\Sigma_t^{\phi}| = \int_{\Sigma}div_{\Sigma}(\phi \nu)=-\int_{\Sigma}H\phi,
\end{align}
where $H$ is the mean curvature of $\Sigma$.\\
If $\Sigma$ is minimal, then the second variation of area formula is
\begin{align}
    \frac{d^2}{dt^2}\biggr\rvert_{t=0}|\Sigma^{\phi}_t| = \int_{\Sigma}\phi L_{\Sigma}\phi.
\end{align}
\end{proposition}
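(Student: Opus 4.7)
The plan is to reduce everything to the fixed surface $\Sigma$ by pulling back the induced metric from $\Sigma_t^\phi$ under the variation map $F_t(x)=\exp_x(t\phi(x)\nu(x))$, writing $|\Sigma_t^\phi|=\int_\Sigma\sqrt{\det g_t}\,dx$ with $g_t:=F_t^{*}g$ (relative to a fixed background volume form on $\Sigma$). For the first variation, Jacobi's formula gives $\tfrac{d}{dt}\sqrt{\det g_t}=\tfrac12\,\mathrm{tr}_{g_t}(\dot g_t)\sqrt{\det g_t}$. At $t=0$ the variation vector field is $V=\phi\nu$, and the symmetry of the Levi-Civita connection yields $\dot g_0(X,Y)=\langle\nabla_X V,Y\rangle+\langle X,\nabla_Y V\rangle$; since $X,Y$ are tangent to $\Sigma$ the terms $(X\phi)\langle\nu,Y\rangle$ drop out, leaving $\dot g_0(X,Y)=-2\phi A_\Sigma(X,Y)$ and hence $\mathrm{tr}_{g_0}(\dot g_0)=-2\phi H$. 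Integrating gives $\tfrac{d}{dt}\big|_{t=0}|\Sigma_t^\phi|=-\int_\Sigma H\phi$. The equivalent formulation $\int_\Sigma\mathrm{div}_\Sigma(\phi\nu)$ follows by expanding the surface divergence in an orthonormal tangent frame $\{e_i\}$ and observing that $(e_i\phi)\langle\nu,e_i\rangle=0$, so $\mathrm{div}_\Sigma(\phi\nu)=\phi\sum_i\langle\nabla_{e_i}\nu,e_i\rangle=-\phi H$.

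For the second variation, the cleanest route is to differentiate the first variation identity once more. Since $H\equiv 0$ at $t=0$ in the minimal case, every term containing an undifferentiated $H$ vanishes, and one is left with
\[
\tfrac{d^2}{dt^2}\big|_{t=0}|\Sigma_t^\phi|=-\int_\Sigma\phi\,\dot H\,d\mathrm{vol}_{g_0},\qquad \dot H:=\tfrac{d}{dt}\big|_{t=0}H_t.
\]
The main technical step is therefore to derive the Jacobi linearization of the mean curvature,
\[
\dot H=\Delta_\Sigma\phi+|A_\Sigma|^2\phi+\mathrm{Ric}(\nu,\nu)\phi,
\]
which I would obtain by writing $H_t=g_t^{ij}A_{t,ij}$ in a local frame and differentiating term by term, using (i) the identity $\dot g_{ij}=-2\phi A_{ij}$ from the first step, (ii) the evolution of the unit normal $\dot\nu_0=-\nabla^\Sigma\phi$, forced by $|\nu_t|\equiv 1$ and $\nu_t\perp(F_t)_{*}T\Sigma$, and (iii) the Ricci identity together with the Gauss equation to extract the ambient curvature term $\mathrm{Ric}(\nu,\nu)\phi$ from the commuted second covariant derivatives of $\nu$. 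Substituting $\dot H$ above and integrating by parts on the closed surface $\Sigma$ (no boundary terms, since $\partial\Sigma=\emptyset$) converts $-\int_\Sigma\phi(\Delta_\Sigma\phi+|A_\Sigma|^2\phi+\mathrm{Ric}(\nu,\nu)\phi)$ into $\int_\Sigma(|\nabla\phi|^2-|A_\Sigma|^2\phi^2-\mathrm{Ric}(\nu,\nu)\phi^2)=\int_\Sigma\phi L_\Sigma\phi$, which is the claimed formula.

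The principal obstacle is precisely the derivation of $\dot H$: maintaining consistent sign conventions among $\nu$, $A_\Sigma$ and $H$ while commuting second covariant derivatives on the ambient manifold past vectors tangent to $\Sigma$ is notoriously error-prone, and is the one step that is not pure bookkeeping. Once this identity is in hand, both variation formulas follow from direct differentiation under the integral and a single integration by parts, and no further geometric input is needed.
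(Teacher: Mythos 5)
The paper does not prove this proposition; it is recalled as a standard fact with a pointer to White's lecture notes, so there is no internal argument to compare against. Your sketch is the standard derivation: pull back by $F_t$, apply Jacobi's formula for $\tfrac{d}{dt}\sqrt{\det g_t}$, compute $\dot g_0=\mathcal{L}_{\phi\nu}g|_\Sigma=-2\phi A_\Sigma$, trace, and integrate; then for the second variation differentiate the first variation identity, observe that minimality kills all terms with an undifferentiated $H$ (including the Jacobian and normal-speed corrections you would otherwise have to track), and invoke the Jacobi linearization $\dot H=\Delta_\Sigma\phi+|A_\Sigma|^2\phi+\mathrm{Ric}(\nu,\nu)\phi$. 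Your sign conventions are internally consistent with the paper's ($\vv{H}=H\nu$, $\mathrm{div}_\Sigma\nu=-H$, $L_\Sigma=-\Delta_\Sigma-|A_\Sigma|^2-\mathrm{Ric}(\nu,\nu)$, so $\dot H=-L_\Sigma\phi$), and your formula $\dot\nu_0=-\nabla^\Sigma\phi$ is correct. The one substantive step you state but do not carry out is the derivation of $\dot H$ itself; you flag this explicitly, and given that the paper treats the entire proposition as a citation, that is a reasonable level of detail. If you wanted to close the gap, the cleanest route is to differentiate $H_t=\mathrm{tr}_{g_t}A_t$ and use the Riccati-type equation $\partial_t A=-\nabla^2\phi-\phi(A^2+R(\cdot,\nu,\nu,\cdot))$ for the second fundamental form under a normal variation, which gives the Ricci term via the trace of the ambient curvature; alternatively, one may avoid $\dot H$ altogether and compute $\tfrac{d^2}{dt^2}\sqrt{\det g_t}$ directly from Jacobi's formula together with $\ddot g_0$, which is what many references do.
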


\subsection{Contracting, Expanding and Mixed Neighbourhoods}

We now focus on our Riemannian $3$-sphere $(M,g)$.
\begin{definition}[c.f. Song \cite{song}]\label{geo nbd}
Let $\Sigma$ be an embedded minimal $2$-sphere in $M^3$. A neighbourhood $U$ of $\Sigma$ is called: \\
\begin{itemize}
    \item a \emph{contracting neighbourhood} if there exists $\delta > 0$ and a foliation $\{\Sigma_t\}_{-\delta<t<\delta}$ of $U$ by $2$-spheres with $\Sigma_0 = \Sigma$ such that the mean curvature vector of $\Sigma_t$ points towards $\Sigma$, for $0<|t|<\delta$.
    \item an \emph{expanding neighbourhood} if there exists $\delta > 0$ and a foliation $\{\Sigma_t\}_{-\delta<t<\delta}$ of $U$ by $2$-spheres with $\Sigma_0 = \Sigma$ such that the mean curvature vector of $\Sigma_t$ points away from $\Sigma$, for $0<|t|<\delta$.
    \item a \emph{mixed neighbourhood} if there exists $\delta > 0$ and a foliation $\{\Sigma_t\}_{-\delta<t<\delta}$ of $U$ by $2$-spheres with $\Sigma_0 = \Sigma$ such that the mean curvature vector of $\Sigma_t$ points towards (resp. away from) $\Sigma$ for $0<t<\delta$ and points away from (resp. towards) $\Sigma$ for $-\delta<t<0$.
\end{itemize}
\end{definition}
Before discussing which types of neighbourhoods arise, we recall the following standard lemma (see, e.g. \cite{padilla}):
\begin{lemma}\label{std lemma}
Let $L$ be an elliptic self-adjoint second-order differential operator on a compact manifold $N$ without boundary. Then, the minimal eigenvalue of $L$ is simple. Moreover, there exists a positive eigenfunction $w: N \to \R_{>0}$ such that the corresponding eigenspace is $\{\lambda \cdot w: \lambda \in \R\}$.
\end{lemma}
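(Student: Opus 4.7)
The plan is to prove this via the variational characterization of eigenvalues combined with the strong maximum principle, which is essentially the elliptic analogue of Perron--Frobenius. First, I would set up the Rayleigh quotient
\begin{align*}
R(\phi) = \frac{\int_N \phi L \phi}{\int_N \phi^2}
\end{align*}
and use standard spectral theory for self-adjoint elliptic operators on a closed manifold (discrete spectrum, compact resolvent after shifting by a large constant) to produce a smooth minimizer $w$ satisfying $Lw = \lambda_1 w$, where $\lambda_1 = \inf_{\phi \neq 0} R(\phi)$.

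The key geometric step is to arrange that $w$ is positive. Writing $L = -\Delta_N + c$ for some smooth potential $c$, I would observe that for $\phi \in C^\infty(N)$ the Rayleigh quotient depends only on $|\nabla \phi|^2$ and $\phi^2$, so $R(|\phi|) = R(\phi)$ in the Sobolev sense (using $|\nabla |\phi|| = |\nabla \phi|$ a.e.). Hence $|w|$ is also a minimizer, and by elliptic regularity it is a smooth nonnegative eigenfunction. To upgrade nonnegativity to strict positivity, I would pick a constant $\mu$ large enough that $\mu + c - \lambda_1 \ge 0$ on $N$, so that $(-\Delta_N + \mu)w = (\mu + c - \lambda_1)w \ge 0$. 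Applying the strong maximum principle (Hopf's lemma) to this shifted operator forces $w \equiv 0$ or $w > 0$ everywhere; since $w$ is a nontrivial eigenfunction, $w > 0$.

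For simplicity of $\lambda_1$, suppose $v$ is any other $\lambda_1$-eigenfunction. Set $t_0 = \inf_N (v/w)$, which is finite and attained since $w > 0$ and $N$ is compact. The function $u := v - t_0 w$ is again a $\lambda_1$-eigenfunction, satisfies $u \ge 0$, and vanishes at the point of infimum. By the previous step applied to $u$ (or directly by the strong maximum principle), $u$ cannot be nonnegative, nontrivial, and have an interior zero, so $u \equiv 0$. Thus $v = t_0 w$, so the $\lambda_1$-eigenspace is one-dimensional.

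The main obstacle I would anticipate is cleanly invoking the strong maximum principle in the presence of a zeroth-order potential $c$ of arbitrary sign, since Hopf's lemma is usually stated for operators with nonnegative zeroth-order term. The shift-by-a-constant trick handles this, but requires one to verify that the resulting inequality $(-\Delta_N + \mu)w \ge 0$ is genuinely of the right form to apply Hopf; after that, the rest of the argument is essentially formal.
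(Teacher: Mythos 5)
The paper does not actually prove Lemma~\ref{std lemma}: it invokes it as standard and cites \cite{padilla}, so there is no in-paper proof to compare against. Your argument is the standard one (Rayleigh quotient minimization, replacing $w$ by $|w|$, strong maximum principle after shifting by a constant, and the $v - t_0 w$ trick for simplicity), and it is essentially correct and complete. Two small points are worth flagging. First, there is a sign slip: from $(-\Delta_N + c)w = \lambda_1 w$ one gets $(-\Delta_N + \mu)w = (\mu - c + \lambda_1)\,w$, not $(\mu + c - \lambda_1)\,w$; the fix is the same — choose $\mu \ge \max_N(c) - \lambda_1$ and $\mu \ge 0$ so that both the shifted potential and the coefficient on the right are nonnegative, and then Hopf/strong maximum principle applies to the nonnegative eigenfunction vanishing at an interior point. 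Second, you specialize to $L = -\Delta_N + c$; the lemma is stated for a general self-adjoint elliptic second-order operator, whose quadratic form is $\int_N A(\nabla\phi,\nabla\phi) + V\phi^2$ in divergence form. The $|\phi|$-trick works verbatim in that generality since $A(\nabla|\phi|,\nabla|\phi|) = A(\nabla\phi,\nabla\phi)$ a.e., so no harm is done — and for the paper's actual application (the stability operator $L_\Sigma = -\Delta_\Sigma - |A_\Sigma|^2 - \mathrm{Ric}(\nu,\nu)$) your Schr\"odinger form is exactly right. One could also add half a sentence making explicit that $|w| > 0$ forces $w$ to have a fixed sign, so that after replacing $w$ by $-w$ if necessary the positive eigenfunction is $w$ itself; you implicitly use this but do not say it.
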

We first deal with the easy case where $\Sigma$ is non-degenerate:
\begin{proposition}\label{strictly stable}
Let $\Sigma$ be an embedded minimal $2$-sphere in $M^3$. If $\Sigma$ is strictly stable or unstable, then $\Sigma$ admits a contracting or expanding neighbourhood, respectively.
\end{proposition}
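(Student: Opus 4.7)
The plan is to construct the desired foliation by flowing $\Sigma$ along the normal direction with speed given by the lowest eigenfunction of the stability operator $L_\Sigma$. First, I would invoke Lemma \ref{std lemma} applied to $L_\Sigma$ (which is elliptic and self-adjoint on the closed surface $\Sigma$) to obtain the lowest eigenvalue $\lambda_1$ and a positive eigenfunction $w \colon \Sigma \to \R_{>0}$ satisfying $L_\Sigma w = \lambda_1 w$. By the definition of (strict) stability, $\Sigma$ is strictly stable iff $\lambda_1 > 0$, while $\Sigma$ is unstable iff $\lambda_1 < 0$.

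Next, I would consider the family $\{\Sigma_t^w\}$ from Proposition \ref{variation of area}. Since $\Sigma$ is compact and $w$ is bounded below by a positive constant, the map $(x,t) \mapsto \exp_x(tw(x)\nu(x))$ is, for $\delta > 0$ sufficiently small, a diffeomorphism from $\Sigma \times (-\delta,\delta)$ onto a tubular neighbourhood $U$ of $\Sigma$. This automatically provides a smooth foliation of $U$ by $2$-spheres $\Sigma_t^w$ with $\Sigma_0^w = \Sigma$. It remains to identify the direction of the mean curvature vector of $\Sigma_t^w$ for $t \neq 0$.

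The key computation is the linearization of the mean curvature along this variation. Combining the first and second variation formulas in Proposition \ref{variation of area} (differentiating $\frac{d}{dt}|\Sigma_t^w| = -\int H_t \,w\, dA_t$ once more at $t = 0$, where $H_0 = 0$), one obtains the standard identity
\begin{equation*}
    \frac{d}{dt}\bigg|_{t=0} H_t \;=\; -L_\Sigma w \;=\; -\lambda_1 w,
\end{equation*}
so that $H_t(x) = -\lambda_1 w(x)\, t + O(t^2)$ uniformly in $x \in \Sigma$. After possibly shrinking $\delta$, the sign of $H_t$ on $\Sigma_t^w$ therefore coincides with the sign of $-\lambda_1 t$, using that $w > 0$.

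Finally, I would carry out the sign analysis. Let $\nu_t$ denote the continuous extension of $\nu$ to the foliation. For $t > 0$ the leaf $\Sigma_t^w$ lies on the $+\nu$ side of $\Sigma$, so its mean curvature vector $H_t \nu_t$ points toward $\Sigma$ precisely when $H_t < 0$; the analogous statement holds for $t < 0$ with signs reversed. In the strictly stable case $\lambda_1 > 0$ the formula above yields $H_t < 0$ for $t > 0$ and $H_t > 0$ for $t < 0$, giving a contracting neighbourhood; in the unstable case $\lambda_1 < 0$ the signs flip and one gets an expanding neighbourhood. The only genuinely subtle point is tracking the sign in the linearization of $H$ and matching it against the Definition \ref{geo nbd} conventions for ``towards'' versus ``away from'' $\Sigma$, but this is routine bookkeeping once $w > 0$ has been fixed.
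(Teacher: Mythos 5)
Your proposal is correct and follows essentially the same route as the paper: use Lemma \ref{std lemma} to obtain the positive first eigenfunction of $L_\Sigma$, flow $\Sigma$ along it to produce a tubular foliation, Taylor expand $H_t = -\lambda_1 w\, t + O(t^2)$, and read off the sign to determine contracting versus expanding. The paper's proof is exactly this argument (it writes $\phi$ for your $w$ and $\lambda$ for your $\lambda_1$), and your added remarks about uniformity on the compact $\Sigma$ and the diffeomorphism property of the normal exponential map are the same implicit justifications.
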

\begin{proof}[Proof of Proposition \ref{strictly stable}]
Let $\nu$ be a choice of unit normal to $\Sigma$ in $M^3$ and let $\phi$ be the lowest eigenfunction (which can be taken to be positive by Lemma \ref{std lemma}) of the stability operator $L_{\Sigma}$ of $\Sigma$ with eigenvalue $\lambda$. Assume, moreover, that $\phi$ is normalized, i.e. $||\phi||_{L^2} = 1$.\\
For $0 \leq |t| < \epsilon$ for $\epsilon > 0$ sufficiently small, we obtain a family of surfaces
\begin{align}
    \Sigma_t = \{\exp_x(t\phi(x)\nu(x)):x \in \Sigma\}
\end{align}
which foliate the tubular neighbourhood $\bigcup\limits_{t \in (-\epsilon,\epsilon)}\Sigma_t$ of $\Sigma$.\\
We can Taylor expand $H_{\Sigma_t}$ as
\begin{align}
    H_{\Sigma_t} &=-tL_{\Sigma}\phi + \mathcal{O}(t^2)\\
    &=-t\lambda \phi + \mathcal{O}(t^2).
\end{align}
Here, our convention is that the mean curvature vector is $\vv{H} = H\nu$, where $H$ is the mean curvature.\\
If $\Sigma$ is strictly stable, then $\lambda>0$. Thus, our expression for $H_{\Sigma_t}$ yields that $H_{\Sigma_t}<0$ for $t \in (0,\epsilon)$ and $H_{\Sigma_t} > 0$ for $t \in (-\epsilon,0)$. Hence, the neighbourhood
\begin{align}
    \bigcup_{t \in (-\epsilon,\epsilon)}\Sigma_t \supset \Sigma
\end{align}
is a contracting neighbourhood of $\Sigma$. 
Similarly, if $\Sigma$ were strictly unstable, then $\lambda < 0$: in this case, the same neighbourhood would be an expanding neighbourhood.\\
\end{proof}
Note that Proposition \ref{strictly stable} only tackles the case for a strictly stable (or unstable) minimal $2$-sphere. The following theorem (adapted from \cite{song}) addresses the case that $\Sigma$ is degenerate stable.

\begin{theorem}\label{song}
Let $\Sigma$ be a degenerate stable embedded minimal $2$-sphere in $M^3$. Then, at least one of the following holds:
\begin{enumerate}
    \item $M^3$ contains infinitely-many embedded minimal $2$-spheres.
    \item $\Sigma$ admits a contracting, expanding or mixed neighbourhood.
\end{enumerate}
\end{theorem}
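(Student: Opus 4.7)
The plan is to adapt the finite-dimensional reduction argument of A.\ Song from \cite{song} to our setting of embedded minimal $2$-spheres. The idea is to parameterize all $2$-spheres near $\Sigma$ that solve the mean curvature equation modulo $\ker(L_\Sigma)$ by a single real parameter via Lyapunov-Schmidt reduction, which reduces the sign analysis of $H$ near $\Sigma$ to the study of one scalar function $g(t)$.

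Since $\Sigma$ is stable and degenerate, the lowest eigenvalue of $L_\Sigma$ is $0$; by Lemma \ref{std lemma} it is simple with a positive eigenfunction $\phi>0$ spanning $\ker(L_\Sigma)$. View embedded hypersurfaces $C^{2,\alpha}$-close to $\Sigma$ as normal graphs with height function $u\in C^{2,\alpha}(\Sigma)$, and write $\mathcal{H}(u)$ for the corresponding mean curvature. Then $\mathcal{H}(0)=0$ and $d\mathcal{H}|_0=-L_\Sigma$, a self-adjoint Fredholm operator whose kernel and cokernel are both spanned by $\phi$. Decompose $C^{2,\alpha}(\Sigma)=\mathbb{R}\phi\oplus W$ with $W=\{u:\int_\Sigma u\phi=0\}$. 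The implicit function theorem applied to the $W$-component of $\mathcal{H}$ then produces, for each small $t\in(-\delta,\delta)$, a unique $v(t)\in W$ with $v(0)=0$ such that $\mathcal{H}(t\phi+v(t))=g(t)\phi$ for a smooth function $g:(-\delta,\delta)\to\mathbb{R}$ with $g(0)=0$. Let $\Sigma_t$ denote the corresponding normal graph.

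Next I would verify two structural properties of this family. First, $\{\Sigma_t\}_{|t|<\delta}$ smoothly foliates a tubular neighbourhood $U$ of $\Sigma$: differentiating the implicit equation at $t=0$ gives $L_\Sigma v'(0)\perp\phi$, hence $v'(0)=0$ by invertibility of $L_\Sigma$ on $W$, so $\partial_t(t\phi+v(t))|_{t=0}=\phi>0$ and the normal graph map is a diffeomorphism onto $U$ after shrinking $\delta$. Second, any embedded minimal $2$-sphere sufficiently $C^1$-close to $\Sigma$ is a normal graph over $\Sigma$ by a standard graphicality argument, and by the uniqueness clause of the implicit function theorem must coincide with some $\Sigma_t$; such a $\Sigma_t$ is minimal precisely when $g(t)=0$.

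Finally, suppose conclusion (1) fails, i.e.\ $M$ has only finitely many embedded minimal $2$-spheres. Then $g$ has only finitely many zeros in $(-\delta,\delta)$, so after further shrinking $g$ has no zeros on $(0,\delta)$ or $(-\delta,0)$ and hence constant sign on each interval. Because $H_{\Sigma_t}=g(t)\phi$ and $\phi>0$, the sign of $H_{\Sigma_t}$ equals that of $g(t)$ and determines the direction of the mean curvature vector of $\Sigma_t$ relative to $\Sigma$ on each side; the four possible sign patterns correspond to exactly one of the contracting, expanding, or mixed cases (two patterns producing mixed neighbourhoods), yielding conclusion (2). The main obstacle in this plan is the completeness step: ensuring that every embedded minimal $2$-sphere near $\Sigma$ is captured by the reduced family $\{\Sigma_t\}$, which a priori requires ruling out non-graphical minimal $2$-spheres accumulating on $\Sigma$; this is handled via Allard-type regularity plus uniqueness in the implicit function theorem, essentially as in \cite{song}.
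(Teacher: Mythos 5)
Your proposal is correct and follows essentially the same strategy as the paper: a Lyapunov--Schmidt / implicit-function-theorem reduction near $\Sigma$ in the direction of the positive kernel eigenfunction, reducing the dichotomy to the isolated-versus-non-isolated behaviour of the zero of a scalar function. The one technical difference is in the choice of complementary subspace for the target: the paper considers the map $(v,c,t)\mapsto H_{t\phi_0+v}-c$ and solves for a constant-mean-curvature family with $H_{\Sigma_t}=c_t\in\R$, whereas you project $\mathcal{H}$ off of $\ker(L_\Sigma)$ and obtain $H_{\Sigma_t}=g(t)\phi$. Both reductions work precisely because $\phi>0$ (so constants pair nontrivially with $\phi$), and in either version the sign of the mean curvature of $\Sigma_t$ is controlled by a single scalar, from which the contracting/expanding/mixed trichotomy follows identically. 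One remark: the ``completeness step'' you flag at the end --- showing every minimal $2$-sphere $C^1$-close to $\Sigma$ is captured by the family $\{\Sigma_t\}$ via graphicality and uniqueness --- is not actually needed. You only require the forward implication: if $g$ has infinitely many zeros near $0$, the corresponding leaves $\Sigma_t$ are minimal and pairwise distinct (being leaves of a foliation), so conclusion (1) holds. The contrapositive you run (finitely many minimal $2$-spheres $\Rightarrow$ finitely many zeros of $g$ $\Rightarrow$ a neighbourhood of definite type) then goes through without any Allard-regularity or exhaustiveness argument, exactly as in the paper's proof.
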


\begin{proof}[Proof of Theorem \ref{song}]
The stability operator $L_{\Sigma} = -\Delta_{\Sigma}-|A_{\Sigma}|^2 - Ric(\nu,\nu)$ is an elliptic operator on $\Sigma$, a compact manifold without boundary. Notice also that $0$ is the minimal eigenvalue of $L_{\Sigma}$. Indeed, since $\Sigma$ is degenerate, there exists a non-trivial solution to $L_{\Sigma}\phi = 0$.\\
Thus, by Lemma \ref{std lemma}, there exists a smooth positive function $\phi_0:\Sigma \to \R_{>0}$ such that  $ker(L_{\Sigma}) = \{\lambda \cdot \phi_0: \lambda \in \R \}$. 
We will now seek to apply the implicit function theorem to prove our theorem. 
We consider the space
\begin{align}
    C^{2,\alpha}_0(\Sigma) = \biggr\{f \in C^{2,\alpha}(\Sigma) : \int_{\Sigma}f\phi_0 = 0\biggr\}
\end{align}
and the map
\begin{align}
    N:C^{2,\alpha}_0(\Sigma)\times \R \times \R \to C^{\alpha}(\Sigma),\text{ } (v,c,t) \mapsto H_{t\phi_0+v}-c,
\end{align}
defined near $(v,t) = (0,0)$. Here, $H_f$ denotes the mean curvature of the exponential graph of $f$ over $\Sigma$, i.e. if $f:\Sigma \to \R$ is a map with sufficiently small $C^{2,\alpha}$-norm, then
\begin{align}
    H_f = H\left( \{exp_x(f(x)\nu(x)):x \in \Sigma \} \right).
\end{align}
Since $H(\Sigma) = H_0 = 0$, we have
\begin{align}
    H_{\epsilon f} = -\epsilon L_{\Sigma}f + \mathcal{O}(\epsilon^2).
\end{align}
We now check that the map $N$ satisfies the assumptions of the implicit function theorem. Clearly, we have $N(0,0,0) = 0$. We also easily see that $N$ is $C^1$.\\
\newline
We consider the linearization $L:C^{2,\alpha}_0(\Sigma)\times \R \to C^{\alpha}(\Sigma)$ of $N$ given by:
\begin{align}
    L(v,c) &= \frac{d}{d \epsilon}\biggr\rvert_{\epsilon = 0}N(\epsilon v,\epsilon c, 0)\\
    &=\frac{d}{d \epsilon}\biggr\rvert_{\epsilon = 0}\left(H_{\epsilon v} -\epsilon c\right)\\
    &=-L_{\Sigma}v - c.
\end{align}
\begin{claim}\label{claim bij}
$L: C^{2,\alpha}_0(\Sigma)\times \R \to C^{\alpha}(\Sigma)$ is bijective.
\end{claim}
\begin{proof}[Proof of Claim \ref{claim bij}]
We have 
\begin{align}
    L(v,c) = 0 &\iff L_{\Sigma}v = -c\\
    &\implies \int_{\Sigma}\phi_0L_{\Sigma}v = -\int_{\Sigma}c\phi_0\\
    &\implies c = 0,
\end{align}
using the fact that $L_{\Sigma}$ is self-adjoint and $L_{\Sigma}\phi_0 = 0$ with $\phi_0 > 0$.\\ \newline
By Lemma \ref{std lemma}, the kernel of $L_{\Sigma}$ is one-dimensional. Thus, if $L(v,0)=0$, since $v \in C^{2,\alpha}_0(\Sigma)$, we must have that $v=0$. This shows that $L(v,c) = 0 \implies (v,c) = 0$, i.e. $L$ is an injective map.\\
\newline
We now claim that $L$ is surjective. Indeed, let $f \in C^{\alpha}(\Sigma)$. By the Fredholm alternative, since $L_{\Sigma}$ is an elliptic, self-adjoint operator, we can solve $-L_{\Sigma}v = f$ if and only if $f \perp  ker(L_{\Sigma})$, i.e. 
\begin{align}
    \int_{\Sigma}f\phi_0 = 0.
\end{align}
We now want to solve the equation $L(v,c) = f$ for $(v,c) \in C^{2,\alpha}_0(\Sigma)\times \R$. We choose $c$ as follows:
\begin{align}
    c:= -\frac{\int_{\Sigma}f\phi_0}{\int_{\Sigma}\phi_0}.
\end{align}
Note that this is well-defined since $\phi_0>0$. Let $\tilde{f}:= f+c$. By our choice of $c$, we have
\begin{align}
    \int_{\Sigma}\tilde{f}\phi_0=0.
\end{align}
Thus, we can find $v \in C^{2,\alpha}_0(\Sigma)$ such that $-L_{\Sigma}v = f+c$. Equivalently, we can solve $L(v,c) = f$ for our choice of $c$. This shows that $L$ is surjective and thus bijective.\\
\end{proof}
By the implicit function theorem, there exists $\delta > 0$ and a $C^1$ map $g:(-\delta,\delta) \to C^{2,\alpha}_0(\Sigma)\times \R$ with $g(0) = 0$ such that: 
\begin{align}
    N(g(t),t) = 0,
\end{align}
for each $t \in (-\delta,\delta)$. Write $g(t) = (v_t,c_t)$, where $v_t \in C^{2,\alpha}_0(\Sigma)$, $c_t \in \R$ with $v_0 = 0$ and $c_0 = 0$.\\
Thus, we have
\begin{align}
    H_{t\phi_0 + v_t} = c_t,
\end{align}
for each $t \in (-\delta,\delta)$. We define the function $\omega: \Sigma \times (-\delta,\delta) \to \R$, $\omega(x,t) = t\phi_0(x) + v_t(x)$. Hence, we have
\begin{align}
    H_{\omega(\cdot,t)} = c_t.
\end{align}
We also define the family of hypersurfaces
\begin{align}
    \Sigma_t = \{exp_x\left(\omega(x,t)\nu(x)\right):x \in \Sigma \}.
\end{align}
Note that $t=0$ is a zero of $c(t)$ since $c(0) = 0$ by the implicit function theorem construction.\\
There are two cases to distinguish here.\\
If $t=0$ is not an isolated zero of $c(t)$, then there exist infinitely-many times $t \in (-\delta,\delta)$ such that $\Sigma_t$ are embedded minimal $2$-spheres which proves the theorem.\\
On the other hand, if $t=0$ is an isolated zero of $c(t)$, then there exists a $0<\delta_1\leq \delta$ such that $c(t) \neq 0$ for $t \in (-\delta_1,\delta_1)$ with $t \neq 0$.\\
Thus:
\begin{align}
    U:= \bigcup_{-\delta_1<t<\delta_1}\Sigma_t
\end{align}
is a neighbourhood of $\Sigma$ of one of the three types (i.e. contracting, expanding or mixed).\\
\end{proof}
\subsection{Squeezing Maps}
In this subsection, we construct a family of 'squeezing maps'. \\ \newline
Let $(M,g)$ be a Riemannian $3$-sphere and suppose that $\Sigma$ is an embedded minimal $2$-sphere admitting a contracting or mixed neighbourhood foliated by $2$-spheres $\{\Sigma_s\}_{-1<s<1}$, where the surfaces indexed by $s \in [0,1)$ are the contracting half and 
\begin{align}
    \Sigma_s = \{exp_x(\omega(x,s)\nu(x)):x \in \Sigma \}.
\end{align}
We define a smooth map $f$ on the contracting half by $exp_x(\omega(x,s)\nu(x))\mapsto s$. By construction, $\nabla f \neq 0$ and $\Sigma_s = f^{-1}(s)$ for $s \in [0,1)$. Moreover, $\Sigma_s = f^{-1}(s)$ is strictly mean-concave for $s> 0$, i.e.
\begin{align}
    \langle \nabla f, \vv{H}(\Sigma_s)\rangle < 0.
\end{align}
From $f$, we obtain the vector field $X = \frac{\nabla f}{|\nabla f|^2}$. Let $\phi:\Sigma \times [0,1] \to M$ be the smooth embedding defined by:
\begin{align}
    \begin{cases} \frac{\partial \phi}{\partial s}(x,s) =X(\phi(x,s))\\ \phi(x,0)=x. \end{cases}
\end{align}
By a standard computation, we obtain that $\frac{d}{ds}f(\phi(x,s)) = 1$ for $s \in [0,1)$: in particular, this yields that $\phi(\Sigma,s) = \Sigma_s$ by recalling that $\Sigma_s = f^{-1}(s)$. Now, let $\Omega_r = \phi(\Sigma \times [0,r))$ and consider the map $P: \Omega_1\times [0,1] \to \Omega_1$ defined by 
\begin{align}
    P(\phi(x,s),t) = \phi(x,(1-t)s).
\end{align}
From this map, we obtain a one-parameter family of maps $P_t:\Omega_1 \to \Omega_1$ defined by $P_t(\phi(x,s)):=P(\phi(x,s),t)$.
\begin{theorem}[cf. Marques-Neves {{\cite[Proposition 5.7]{neves}}}]\label{squeezing}
There exists $r_0>0$ such that $P_t:\Omega_{r_0} \to \Omega_{r_0}$ satisfies:
\begin{enumerate}[(i)]
    \item $P_0(x) = x$, for all $x \in \Omega_{r_0}$ and $P_t(x) = x$, for all $x \in \Sigma$ and $0 \leq t \leq 1$,
    \item $P_t(\Omega_r)\subset \Omega_r$ for $0 \leq t \leq 1$, $r \leq r_0$ and $P_1(\Omega_{r_0}) = \Sigma$,
    \item $P_t:\Omega_{r_0} \to \Omega_{r_0}$ is an embedding, for $0 \leq t < 1$,
    \item For all surfaces $V \subset \Omega_{r_0}$, we have
    \begin{align}
        \frac{d}{dt}|P_t(V)| \leq 0,
    \end{align}
    with equality if and only if $V \subset \Sigma$.
\end{enumerate}
\end{theorem}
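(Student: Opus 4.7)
My plan is to handle properties (i)--(iii) by direct inspection in the $\phi$-coordinates and then prove (iv) using the first variation formula, with the mean-convexity of the foliation as the key ingredient. For (i)--(iii), I would write $P_t = \phi\circ R_t\circ \phi^{-1}$, where $R_t\colon\Sigma\times[0,r_0)\to\Sigma\times[0,r_0)$ is the smooth map $(x,s)\mapsto (x,(1-t)s)$. Since $R_t$ fixes $\Sigma\times\{0\}$ for every $t\in[0,1]$, sends $\Sigma\times[0,r)$ into itself for each $r\le r_0$, collapses to $\Sigma\times\{0\}$ at $t=1$, and is a diffeomorphism onto its image for $t<1$, each of these transfers directly to $P_t$ on $\Omega_{r_0}$. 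In particular $P_1(\Omega_{r_0})=\Sigma$ and $P_t$ is an embedding for $0\le t<1$.

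For (iv), I would differentiate $P_t(\phi(x,s))=\phi(x,(1-t)s)$ in $t$ and use $\partial_s\phi = X\circ\phi$ together with $f\circ\phi(x,s)=s$ to identify the flow generator as $W_t(y)=-f(y)X(y)/(1-t)$ on $\Omega_{r_0}$. The first variation of area then gives
\[
\frac{d}{dt}|P_t(V)| \;=\; \int_{P_t(V)} \mathrm{div}_{P_t(V)}(W_t)\,dA,
\]
and a direct computation using $\nabla f = |\nabla f|^2 X$ and $X(f)=1$ yields the pointwise identity
\[
\mathrm{div}_{V'}(W_t) \;=\; -\frac{1}{1-t}\left[\frac{|\nabla^T f|^2}{|\nabla f|^2}\,+\,f\,\mathrm{div}_{V'}(X)\right],
\]
where $\nabla^T f$ denotes the tangential projection of $\nabla f$ onto $TV'$. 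The first bracketed term is manifestly nonnegative and vanishes precisely when $V'$ is tangent to the leaf $\Sigma_{f(y)}$ through the point. On a leaf itself, $\mathrm{div}_{\Sigma_s}(X) = -H_{\Sigma_s}/|\nabla f|$, which is strictly positive for $s>0$ by the hypothesis $\langle \nabla f,\vec H(\Sigma_s)\rangle<0$ coming from the contracting (or mixed) neighbourhood.

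The main obstacle is promoting these two leaf-level observations to the required nonpositivity at an arbitrary tangent plane. The plan is to decompose the unit normal to $V'$ as $\nu_{V'}=\cos\theta\,N + \sin\theta\,T$ with $N=\nabla f/|\nabla f|$ and $T\in T\Sigma_{f(y)}$, so that $|\nabla^T f|^2/|\nabla f|^2 = \sin^2\theta$, and to expand $\mathrm{div}_{V'}(X) = \mathrm{div}_M(X) - \langle\nabla_{\nu_{V'}}X,\nu_{V'}\rangle$ into the leaf-divergence contribution $-H_{\Sigma_{f(y)}}/|\nabla f|$ plus cross terms of the form $\sin\theta\cos\theta\cdot B$ and $\sin^2\theta\cdot A$, with $A,B$ pointwise bounded by $\|\nabla X\|_{C^0(\Omega_{r_0})}$. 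Absorbing the $\sin\theta\cos\theta$ cross terms into $\sin^2\theta$ via Cauchy--Schwarz and shrinking $r_0$ so that $f$ is uniformly small on $\Omega_{r_0}$ then delivers the desired pointwise nonpositivity, in direct analogy with Marques--Neves~[Proposition~5.7]. Equality throughout forces $\sin\theta\equiv 0$ along $V$ (tangency to the leaves) and, combined with the strict mean-convexity for $s>0$, also $f\equiv 0$ on $V$, giving $V\subset\Sigma$ as claimed.
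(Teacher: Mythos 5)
Your handling of (i)--(iii) via $P_t=\phi\circ R_t\circ\phi^{-1}$ is fine and matches what the paper does (the paper just says these hold ``by definition''). Your setup for (iv)---identifying the generating vector field $W_t=-\tfrac{f}{1-t}X$, applying the first variation formula, writing $\mathrm{div}_{V'}W_t=-\tfrac{1}{1-t}\bigl[\sin^2\theta+f\,\mathrm{div}_{V'}X\bigr]$, and decomposing $\nu_{V'}=\cos\theta\,N+\sin\theta\,T$---is also structurally the same as the paper's computation.

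The gap is in the final step, where you propose to handle the $\sin\theta\cos\theta$ cross term by Cauchy--Schwarz and shrinking $r_0$. This does \emph{not} close the argument in the setting where the theorem is actually used. After Cauchy--Schwarz you are left with a nonnegative error of order $f^2$, which must be dominated by the good leaf-level term $f\,\mathrm{div}_{\Sigma_s}X\sim f\cdot|H_{\Sigma_s}|/|\nabla f|$. When $\Sigma$ is strictly stable this is fine, since $|H_{\Sigma_s}|\gtrsim s$. But Theorem~\ref{squeezing} is invoked precisely for a \emph{degenerate} stable $\Sigma$ (Claim~\ref{contracting claim}), where the mean curvature $c_t=H_{\Sigma_t}$ produced by the implicit function theorem in Theorem~\ref{song} may vanish to arbitrarily high order as $t\to 0$; then $f\,|H_{\Sigma_s}|=o(f^2)$ and the Cauchy--Schwarz remainder wins. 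Shrinking $r_0$ does not help because the ratio (bad error)/(good term) $\sim f\,B^2/|H_{\Sigma_s}|$ blows up as $f\to 0$.

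The actual mechanism, both in the paper (Claim~2(ii)) and in Marques--Neves, is that the $\sin\theta\cos\theta$ cross term vanishes \emph{identically}: the coefficient is $-\bigl(\langle\nabla_N X,T\rangle+\langle\nabla_T X,N\rangle\bigr)$, and since $X=\nabla f/|\nabla f|^2$, $T(f)=0$, and $T(|\nabla f|)=\mathrm{Hess}\,f(T,N)$, one computes
\begin{align}
\langle\nabla_T X,N\rangle=-\frac{\mathrm{Hess}\,f(T,N)}{|\nabla f|^2},\qquad \langle\nabla_N X,T\rangle=\frac{\mathrm{Hess}\,f(N,T)}{|\nabla f|^2},
\end{align}
which sum to zero by symmetry of the Hessian. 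This is the antisymmetry identity $\langle\nabla_{e_i}Z_t,N\rangle=-\langle\nabla_N Z_t,e_i\rangle$ that the paper isolates as Claim~2(ii). Once you have it, $\mathrm{div}_{V'}W_t$ splits into $-\tfrac{1}{1-t}\bigl(\sin^2\theta(1+O(s))+s\,|\nabla f|^{-1}|\langle N,\vec H(\Sigma_s)\rangle|\bigr)$, where both summands are individually nonnegative for $s$ small regardless of the vanishing order of $H_{\Sigma_s}$, giving nonpositivity with the claimed equality characterization. You should prove the cancellation rather than estimate the cross term.
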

\begin{proof}[Proof of Theorem \ref{squeezing}]
By definition, properties ($i$), ($ii$) and ($iii$) hold for any $r_0 > 0$.\\
We now prove property ($iv$). If $y=\phi(x,s) \in \Omega_1$, then a standard computation yields that $\frac{d}{dt}P_t(y) = Z_t(P_t(y))$, where $Z_t = -\frac{f}{1-t}X$. Given $x \in \Omega_1$, we consider a $2$-dimensional subspace $\sigma \subset T_x \Omega_1$ and write:
\begin{align}
    div_{\sigma}Z_t(x) &=\langle \nabla_{v_1}Z_t,v_1\rangle + \langle \nabla_{v_2}Z_t,v_2\rangle,
\end{align}
where $\{v_1,v_2\}$ is an orthonormal basis of $\sigma$. By the first variation of area formula, we have
\begin{align}
    \frac{d}{dt}|P_t(V)| = \int_{P_t(\Sigma)} div_{\sigma}Z_t.
\end{align}

We can choose coordinates $(x_1,x_2)$ near $x \in \Sigma$. Then, $\biggr\{e_i  = \frac{\partial \phi}{\partial x_i}\biggr\}_{i=1}^2$ is a frame of $T\Sigma_s = Tf^{-1}(s)$ where $\Sigma_s$ are the $2$-spheres which foliate the contracting half of the neighbourhood of $\Sigma$. Let $N = \frac{\nabla f}{|\nabla f|}$ and let $A = A_{\Sigma_s}$ be the second fundamental form of $\Sigma_s$ (with respect to the unit normal $N$).\\
\begin{claim}
The following hold:
\begin{enumerate}[(i)]
    \item $\langle \nabla_{e_i}Z_t,e_j\rangle = -\langle Z_t,A(e_i,e_j)\rangle$, \\
    \item $\langle \nabla_{e_i}Z_t,N\rangle = -\langle \nabla_NZ_t,e_i\rangle$,\\
    \item $\langle \nabla_NZ_t,N\rangle = -\frac{1}{1-t}(1+f\langle \nabla_NX,N\rangle)$.
\end{enumerate}
\end{claim}
\begin{proof}[Proof of Claim 2]
Since $Z_t$ is a multiple of the unit normal vector $N$, we have:
\begin{align}
    \langle Z_t,A(e_i,e_j)\rangle &=\langle Z_t,\nabla_{e_i}e_j\rangle \\
    &=-\langle \nabla_{e_i}Z_t,e_j\rangle,
\end{align}
where we used that $Z_t$ is orthogonal to $e_j$. This proves ($i$).\\
Next, we compute:
\begin{align}
    \langle \nabla_{e_i}Z_t,N\rangle &= \left\langle \nabla_{e_i}\left(-\frac{f}{1-t}X\right),N\right\rangle\\
    &=-\frac{f}{1-t}\langle \nabla_{e_i}X,N\rangle\\
    &=\frac{f}{1-t}\left\langle \frac{\partial \phi}{\partial x_i},\nabla_{\frac{\partial \phi}{\partial s}}N\right\rangle,
\end{align}
where we used the fact that $\langle e_i,N \rangle = 0$. Similarly, we have:
\begin{align}
    \langle \nabla_NZ_t,e_i \rangle &=-\frac{f}{1-t}\left\langle \nabla_{\frac{\partial \phi}{\partial s}}N,\frac{\partial \phi}{\partial x_i}\right\rangle,
\end{align}
which, together with the above, proves ($ii$).\\
Finally, a similar computation yields ($iii$) by noting that $N(f) = |\nabla f|$.
\end{proof}
Now, continuing the proof of the theorem, we choose an orthonormal basis $\{v_1,v_2\}$ for $\sigma$ so that $v_1$ is orthogonal to $N$. Then, we have $v_2 = (\cos \theta)u + (\sin \theta)N$, where $u \in T_y\Sigma_s$, $\theta$ is some polar angle and $|u| = 1$.\\
Using Claim 2, we now compute:
\begin{align}
    div_{\sigma}Z_t &=\langle \nabla_{v_1}Z_t,v_1\rangle + \langle \nabla_{v_2}Z_t,v_2\rangle \\
    &=-\langle Z_t,\vv{H}(\Sigma_s)\rangle + \sin^2\theta(\langle \nabla_NZ_t,N\rangle - \langle \nabla_uZ_t,u\rangle)\\
    &=\frac{1}{1-t}\left(|\nabla f|^{-1}s \langle N,\vv{H}(\Sigma_s)\rangle-\sin^2\theta (1+s\langle \nabla_NX,N\rangle + s\langle X,A(u,u)\rangle\right).
\end{align}
Since $\Sigma_s$ are strictly mean-concave for $s>0$, we have $ \langle N,\vv{H}(\Sigma_s)\rangle < 0 $. Thus, for $s$ sufficiently small, we have that $div_{\sigma}Z_t$ is non-positive with equality if and only if $s=0$.\\
\end{proof}
As a corollary of the proof of Theorem \ref{squeezing}, we obtain
\begin{corollary}\label{expanding}
If $\Sigma$ admits an expanding neighbourhood, then we get the existence of a collection of 'expanding maps', i.e.
\begin{align}
    \frac{d}{dt}|P_t(V)| > 0,
\end{align}
with equality if and only if $V \subset \Sigma$.
\end{corollary}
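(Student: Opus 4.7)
The plan is to proceed in direct analogy with the proof of Theorem~\ref{squeezing}, now with the input of an expanding foliation. Specifically, given the expanding neighbourhood of $\Sigma$ with foliation $\{\Sigma_s\}$, I would construct the same objects: a smooth function $f$ with $\Sigma_s = f^{-1}(s)$ and $f(\Sigma)=0$; the vector field $X = \nabla f / |\nabla f|^2$; the flow $\phi:\Sigma\times[0,1]\to M$ of $X$ starting on $\Sigma$; and the family $P_t:\Omega_{r_0}\to\Omega_{r_0}$ defined by $P_t(\phi(x,s)) = \phi(x,(1-t)s)$. The analogues of properties (i)--(iii) of Theorem~\ref{squeezing} are immediate from the construction, and the main task is to establish the reversed version of (iv).

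For property (iv) with the reversed inequality, I would repeat the divergence computation of $Z_t = -\frac{f}{1-t}X$ carried out in the proof of Theorem~\ref{squeezing}. The key observation is that Claim~2 (parts (i), (ii), (iii)) is purely a local statement about the structure of $Z_t$, $X$, and the leaves, and at no point uses the sign of $\langle \nabla f, \vv{H}(\Sigma_s)\rangle$; it transfers verbatim. Thus the identical expression
\begin{align*}
    div_\sigma Z_t = \frac{1}{1-t}\Big(s|\nabla f|^{-1}\langle N, \vv{H}(\Sigma_s)\rangle - \sin^2\theta\big(1 + s\langle \nabla_N X, N\rangle + s\langle X, A(u,u)\rangle\big)\Big)
\end{align*}
is derived. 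The decisive sign flip is that in the expanding case $\vv{H}(\Sigma_s)$ points away from $\Sigma$, so $\langle N, \vv{H}(\Sigma_s)\rangle > 0$ for $s>0$; the leading term in the divergence is therefore strictly positive rather than negative. First variation of area then yields $\frac{d}{dt}|P_t(V)| \ge 0$, with equality precisely when $\sigma \subset T\Sigma$ pointwise along $V$, i.e.\ when $V \subset \Sigma$.

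I expect the main obstacle to be controlling the $-\sin^2\theta(1 + O(s))$ term, whose sign does \emph{not} flip between the contracting and expanding cases. In the squeezing proof this term cooperated with the (negative) mean curvature term; in the expanding case it opposes the (positive) mean curvature term, so the overall positivity of $div_\sigma Z_t$ is not automatic and requires a genuine estimate. I would address this by shrinking $r_0$ and exploiting a quantitative form of strict mean-convexity of the leaves $\Sigma_s$—uniform on compact subneighbourhoods of $\Sigma$—to absorb the transversal contribution, mirroring exactly the role the choice of $r_0$ played at the end of the squeezing proof. Once this estimate is in place, the rest of the argument is the sign-reversed version of Theorem~\ref{squeezing}.
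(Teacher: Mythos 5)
You correctly mirror the setup of Theorem~\ref{squeezing} and you correctly flag the essential difficulty: in the expression
\[
div_\sigma Z_t = \frac{1}{1-t}\Big(s|\nabla f|^{-1}\langle N, \vv{H}(\Sigma_s)\rangle - \sin^2\theta\big(1 + s\langle \nabla_N X, N\rangle + s\langle X, A(u,u)\rangle\big)\Big),
\]
only the mean curvature term flips sign, while the $-\sin^2\theta(1+O(s))$ term does not. In the contracting case both terms are nonpositive, so the conclusion is automatic; in the expanding case they compete, and you rightly say that strict positivity is not automatic. That diagnosis is correct, and it is more careful than the paper's own one-line assertion that the corollary follows from the proof of Theorem~\ref{squeezing}.

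However, the fix you propose --- shrink $r_0$ and invoke ``a quantitative form of strict mean-convexity of the leaves, uniform on compact subneighbourhoods of $\Sigma$'' --- does not close the gap, and in fact points in the wrong direction. Since $\Sigma=\Sigma_0$ is minimal, $\langle N, \vv{H}(\Sigma_s)\rangle\to 0$ as $s\to 0$ (in the degenerate setting of this paper it vanishes to order at least $2$, since $c_t=O(t^2)$ in the construction of Theorem~\ref{song}), so the favorable term $s|\nabla f|^{-1}\langle N,\vv{H}(\Sigma_s)\rangle$ is $o(s)$ and decays as $r_0\to 0$. Meanwhile the unfavorable term $-\sin^2\theta(1+O(s))$ is of size $\approx -\sin^2\theta$, which is bounded away from $0$ whenever the tangent plane $\sigma$ is genuinely tilted relative to the leaf. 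Shrinking $r_0$ therefore makes the bad term dominate more strongly, not less. There is also no ``uniform'' lower bound on $|H(\Sigma_s)|$ available near $\Sigma$, because $H\to 0$ there; compact subneighbourhoods away from $\Sigma$ exclude precisely the region where the sign matters. Concretely, for a surface $V$ that is tilted (e.g. a piece that interpolates between two nearby leaves, or any portion transverse to the foliation) and lies close to $\Sigma$, the integrand $div_\sigma Z_t$ is strictly negative on that portion, so pointwise positivity fails outright and your integral inequality does not follow. Your phrase that ``the leading term in the divergence is therefore strictly positive'' is the core misstep: the mean curvature term is the lower-order term near $\Sigma$, not the leading one. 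The sign-reversed conclusion does hold cleanly when $V$ is a leaf $\Sigma_s$ (there $\theta\equiv 0$ and only the mean curvature term survives), and that is presumably what the corollary is implicitly about; but the argument as you have outlined it does not deliver the statement for arbitrary surfaces $V\subset\Omega_{r_0}$, and no choice of $r_0$ will make it do so.
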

\bigskip
\section{Proof of the Main Theorem}
The goal of this section is to prove our main theorem, which we restate here for convenience of the reader.
\begin{theorem}[Existence Theorem]\label{main}
Let $(M^3,g)$ be a Riemannian manifold diffeomorphic to $\mathbb{S}^3$. Then, $M$ contains at least two embedded minimal $2$-spheres or admits an optimal foliation by $2$-spheres. More precisely, either $(M^3,g)$
\begin{enumerate}[(1)]
    \item contains a strictly stable embedded minimal $2$-sphere, in which case $M$ contains at least three embedded minimal $2$-spheres,
    \item or contains a degenerate embedded minimal $2$-sphere $\Sigma$. In this case, either
    \begin{enumerate}[(a)]
        \item $M$ contains infinitely-many embedded minimal $2$-spheres.
        \item or $\Sigma$ admits either
        \begin{enumerate}[(i)]
            \item a contracting neighbourhood, in which case $M$ contains at least three embedded minimal $2$-spheres,
            \item or a mixed neighbourhood, in which case $M$ contains at least two embedded minimal $2$-spheres,
            \item or an expanding neighbourhood, in which case $M$ admits an optimal foliation by $2$-spheres.
        \end{enumerate}
    \end{enumerate}
    \item or contains an unstable embedded minimal $2$-sphere, in which case $M$ admits an optimal foliation by $2$-spheres and contains a second embedded minimal $2$-sphere.
\end{enumerate}
\end{theorem}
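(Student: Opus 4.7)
The plan is a case analysis driven by the results proved earlier in the paper. First I would invoke Proposition \ref{strictly stable} and Theorem \ref{song} to reduce every scenario to one of four geometric situations: (a) $M$ contains infinitely many embedded minimal $2$-spheres, (b) some embedded minimal $2$-sphere $\Sigma$ admits a contracting neighbourhood, (c) some embedded minimal $2$-sphere $\Sigma$ admits a mixed neighbourhood, or (d) every embedded minimal $2$-sphere admits an expanding neighbourhood. Case (a) is immediate. Cases (b), (c), (d) correspond to the branches of the theorem that require real argument. Note that in case (1) (strictly stable) and case (3) (no stable sphere, with $\Sigma$ produced by Simon--Smith Theorem \ref{simon-smith}) Proposition \ref{strictly stable} places us directly into case (b) and case (d) respectively; so once I handle (b), (c), (d) plus the modest additional information on (b) when $\Sigma$ is strictly stable, the whole theorem follows.

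\textbf{Min-max on the two sides using the squeezing maps.} In the contracting case the sphere $\Sigma$ separates $M\cong S^3$ into two closed $3$-disks $S^\pm$ whose boundary $\Sigma$ is a minimal $2$-sphere, and the foliation in Definition \ref{geo nbd} shows that the mean curvature vector on each side of $\Sigma$ points into $\Sigma$. I would run the Ketover--Liokumovich--Song min-max theorem \cite[Theorem 10]{lio} on each of $S^+$ and $S^-$ separately. The subtle point is ruling out that the min-max surface is just $\Sigma$ itself: here I invoke the squeezing map constructed in Theorem \ref{squeezing}. The map $P_t$ can be used to push any sweepout of $S^\pm$ that starts at $\Sigma$ strictly into the interior while decreasing area, so the $1$-width of each side is strictly greater than $|\Sigma|$, which forces the min-max limit to be an embedded minimal $2$-sphere lying in the interior of each disk. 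This produces two new spheres beyond $\Sigma$, giving at least three in total and settling both the strictly stable case (1) and the contracting degenerate case (2b)(i). The mixed case (2b)(ii) is entirely analogous but uses only the contracting half of the mixed neighbourhood via Theorem \ref{squeezing}, yielding one extra minimal sphere and hence at least two in total.

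\textbf{Optimal foliations in the expanding cases.} In the expanding case (2b)(iii) and in case (3) I would produce an optimal foliation by emulating Haslhofer--Ketover. Starting from the decomposition $M=D^-\cup N(\Sigma)\cup D^+$, where $N(\Sigma)$ is the expanding tubular neighbourhood of $\Sigma$ given by Definition \ref{geo nbd} and $D^\pm$ are the two complementary $3$-disks with strictly mean-convex boundary (using Corollary \ref{expanding} to identify the inward-pointing side), I would apply mean curvature flow with surgery to foliate each of $D^\pm$ by mean-convex $2$-spheres and then glue these with the expanding foliation of $N(\Sigma)$ to obtain a smooth sweepout $\{\Sigma_t\}_{-1\le t\le 1}$ with $\Sigma_0=\Sigma$ and $|\Sigma_t|<|\Sigma|$ for $t\ne 0$. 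To promote this to an \emph{optimal} foliation one must know that $\Sigma$ itself realises the $1$-parameter min-max width. This is the point where the absence of bumpiness is an obstruction: the standard argument identifies the min-max surface only up to stable components, and without bumpiness $\Sigma$ is not automatically unstable. I would therefore prove a variant of \cite[Lemma 3.2]{haslhofer} saying that, under the standing assumption that $M$ contains no stable minimal $2$-sphere with a contracting or mixed neighbourhood (a hypothesis that is legitimate here because those possibilities are already accounted for in cases (1), (2b)(i), (2b)(ii)), the $1$-width is attained with multiplicity one. In case (3) I would then finish by running two-parameter min-max with the connect-sum family $\Sigma_{s,t}=\Sigma_s\#\Sigma_t$ and applying the Ketover--Marques--Neves catenoid estimate \cite{ketover} to obtain $\sup_{s,t}|\Sigma_{s,t}|<2|\Sigma|$, together with Lusternik--Schnirelmann to ensure the limit is not $\Sigma$ itself; this produces the second unstable minimal $2$-sphere.

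\textbf{Main obstacle.} The genuinely delicate step is the modified $1$-width lemma in the degenerate expanding case. In the bumpy setting of \cite{haslhofer} one uses strict instability of $\Sigma$ to exclude multiplicity and to identify the min-max limit with $\Sigma$; here $\Sigma$ may sit on the boundary of the stable cone and the usual perturbation/bumpy-metric tricks are unavailable. The resolution I have in mind is to rule out every alternative limit geometrically: any other min-max candidate would either produce a stable minimal $2$-sphere (excluded by hypothesis), or a degenerate stable $2$-sphere with a contracting or mixed neighbourhood (already treated), or force an extra embedded minimal $2$-sphere (which already proves the theorem). This dichotomy, combined with the expanding neighbourhood of $\Sigma$ and the squeezing/expanding maps of Theorem \ref{squeezing} and Corollary \ref{expanding}, should force the $1$-width to be realised by $\Sigma$ with multiplicity one, closing the argument and completing the proof of Theorem \ref{main}.
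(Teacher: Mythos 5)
Your proposal is correct and follows essentially the same route as the paper: reduce to contracting/mixed/expanding neighbourhoods via Proposition \ref{strictly stable} and Theorem \ref{song}, run Ketover--Liokumovich--Song min-max with the squeezing map of Theorem \ref{squeezing} in the contracting and mixed cases, and in the expanding and unstable cases build the optimal foliation from mean curvature flow with surgery together with a modified version of \cite[Lemma 3.2]{haslhofer}, finishing with the catenoid estimate and Lusternik--Schnirelmann. One small point of imprecision in your ``main obstacle'' paragraph: the paper's modified Lemma 3.2 does not show that the \emph{given} degenerate sphere $\Sigma$ realises $\omega_1(M)$, but rather that the minimum-area minimal $2$-sphere $\Sigma'$ (which exists once finitely many are assumed) satisfies $|\Sigma'|=\gamma(M)=\omega_1(M)$ with multiplicity one, after which one builds the optimal foliation around $\Sigma'$ rather than $\Sigma$; and the paper additionally needs a modified intersection lemma (\cite[Lemma 3.4]{haslhofer}, proved via level set flow and the avoidance principle) to rule out min-max limits consisting of several disjoint spheres, which your ``dichotomy'' sketch alludes to but does not make explicit.
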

As mentioned in the introduction, there are three scenarios to distinguish: $M^3$ admits a degenerate stable embedded minimal $2$-sphere but no strictly stable minimal $2$-spheres; $M^3$ admits a strictly stable embedded minimal $2$-sphere; $M^3$ does not admit a stable embedded minimal $2$-sphere.\\

\begin{proof}[Proof of Theorem \ref{main}]
The bulk of the proof will consist of dealing with the scenario where $M^3$ admits a degenerate stable embedded minimal $2$-sphere $\Sigma$ but no strictly stable minimal $2$-spheres. We will tackle the other scenarios at the end.\\ Suppose now that $M^3$ contains an embedded degenerate stable minimal $2$-sphere $\Sigma$ but contains no strictly stable embedded minimal $2$-spheres. By Theorem \ref{song}, either there exist infinitely-many embedded minimal $2$-spheres (in which the case the theorem is proved) or $\Sigma$ admits a contracting, mixed or expanding neighbourhood.\\
In order to fix notation, we denote by $\{\Sigma_t\}_{t \in (-\delta,\delta)}$ the foliation of a tubular neighbourhood of $\Sigma$ by $2$-spheres (such a foliation is constructed in Theorem \ref{song}).\\
\begin{claim}\label{contracting claim}
If $\Sigma$ admits a contracting or mixed neighbourhood, then we can find another embedded minimal $2$-sphere in $M$.\\
\end{claim}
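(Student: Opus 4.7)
The plan is to apply the Ketover--Liokumovich--Song one-parameter min-max theorem \cite[Theorem 10]{lio} to a closed 3-disk bounded by $\Sigma$ on whose side the local foliation is contracting, and to use the squeezing maps of Theorem \ref{squeezing} to rule out that the resulting min-max surface is simply $\Sigma$ itself. Since $\Sigma \subset M \cong \mathbb{S}^3$ is an embedded 2-sphere, it separates $M$ into two closed 3-disks $\overline{S^+}$ and $\overline{S^-}$, each a compact manifold with minimal boundary $\Sigma$. In the contracting case, both $\overline{S^\pm}$ contain a mean-concave half-foliation of a tubular neighbourhood of $\Sigma$; in the mixed case, exactly one side does. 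In either situation, fix such a side and call it $\overline{S}$.

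Using the contracting half-foliation $\{\Sigma_s\}_{0 \leq s < \delta}$ constructed in Theorem \ref{song}, I would extend to a one-parameter sweepout $\{\Lambda_t\}_{t \in [0,1]}$ of $\overline{S}$ by embedded 2-spheres with $\Lambda_0 = \Sigma$ and $\Lambda_1$ a point in the interior of $S$; this can be carried out, for example, by running mean-convex mean curvature flow with surgery starting from an inner leaf $\Sigma_{s_0}$, as in the approach of \cite{haslhofer}. Letting $\Pi$ be the saturation of this family under ambient isotopies fixing $\Sigma$ and applying \cite[Theorem 10]{lio} to the pair $(\overline{S},\partial\overline{S})$ and the class $\Pi$ produces a smooth embedded minimal surface $\Sigma' \subset \overline{S}$ realizing the min-max width $W(\Pi)$, with each connected component a 2-sphere by the Simon--Smith genus bound.

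The critical step is to show that $\Sigma'$ is not $\Sigma$ itself, and this is where the main obstacle lies. By Theorem \ref{squeezing}, $\Sigma$ admits squeezing maps $P_t \colon \Omega_{r_0} \to \Omega_{r_0}$ which fix $\Sigma$, retract $\Omega_{r_0}$ onto $\Sigma$ at $t=1$, and strictly decrease the area of any slice not contained in $\Sigma$. A Lusternik--Schnirelmann type homotopy argument prevents any sweepout in $\Pi$ from being deformed into $\Omega_{r_0}$ while preserving the boundary conditions $\Lambda_0 = \Sigma$ and $\Lambda_1$ a point, since $P_1$ collapses $\Omega_{r_0}$ to $\Sigma$ and cannot realize an interior point. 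Combined with property (iv) of Theorem \ref{squeezing}, this forces $\sup_t |\Lambda_t^j| > |\Sigma|$ along every sweepout $\{\Lambda_t^j\} \in \Pi$, and hence $W(\Pi) > |\Sigma|$. If the sweepout in the previous paragraph is further arranged so that $\sup_t |\Lambda_t| < 2|\Sigma|$ (automatic after a brief burst of mean curvature flow off $\Sigma$), then the two-sided bound $|\Sigma| < W(\Pi) < 2|\Sigma|$ also precludes $\Sigma'$ from being $\Sigma$ counted with multiplicity at least two. It follows that $\Sigma'$ must contain a connected component sitting in the open interior of $S$, which is the desired second embedded minimal 2-sphere.

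The main obstacle is making the squeezing argument of the last paragraph rigorous within the Ketover--Liokumovich--Song framework for manifolds with minimal boundary: one needs a careful pull-back of sweepouts through $P_t$ together with a topological argument ensuring that nontriviality of $\Pi$ is incompatible with sup-area $\leq |\Sigma|$, and one must simultaneously use the upper width bound to exclude higher-multiplicity appearances of $\Sigma$. The strict mean-concavity of $\Sigma_s$ for $s > 0$ established in the contracting half of Theorem \ref{song}, which is precisely what makes Theorem \ref{squeezing}(iv) strict off $\Sigma$, is the essential geometric input.
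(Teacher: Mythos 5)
Your proposal matches the paper's approach: decompose $M$ along $\Sigma$, apply the Ketover--Liokumovich--Song min-max theorem \cite[Theorem 10]{lio} to the side with a contracting half-foliation, and use the squeezing maps of Theorem \ref{squeezing} to force the one-parameter width strictly above $|\Sigma|$ (the paper cites \cite[Lemma 8.1]{lio} for precisely the Lusternik--Schnirelmann step you sketch). The two extra devices you propose are unnecessary, though not wrong: rather than constructing a particular sweepout via mean curvature flow with surgery, the paper simply lets $\Lambda$ be the class of \emph{all} smooth one-parameter sweepouts of $\overline{S^+}$ by $2$-spheres (nonempty by pulling back a Euclidean one), and rather than arranging an upper bound $W(\Pi) < 2|\Sigma|$ to exclude $\Sigma$ with multiplicity two, the paper invokes Remark 11 following \cite[Theorem 10]{lio}, which already guarantees that once $\omega(\overline{S^+},\Lambda) > |\Sigma|$ the min-max limit is an embedded minimal $2$-sphere lying in $Int(\overline{S^+})$.
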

\begin{proof}[Proof of Claim \ref{contracting claim}]
We can decompose $M$ according to
\begin{align}
    M = S^- \cup \Sigma \cup S^+,
\end{align}
where $S^{\pm}$ are the connected components of $M \setminus \Sigma$ chosen so that $S^+$ contains the contracting half of the neighbourhood (without loss of generality, we assume that this corresponds to $\{\Sigma_t\}_{t \in (0,\delta)}$) and $S^-$ contains the other half (which is contracting in the case of a contracting neighbourhood and expanding in the case of a mixed neighbourhood). Then, $\overline{S^+}$ is a smooth $3$-disk with minimal boundary by construction. In particular, we have $\partial \overline{S^+} = \Sigma$.\\
In order to prove Claim \ref{contracting claim}, we will seek to apply the min-max theorem \cite[Theorem 10]{lio} from Ketover-Liokumovich-Song. For this, we recall the necessary definitions from \cite{lio}.\\

\begin{definition}[One-Parameter Sweepouts]\label{sweepout}
Let $(N,g)$ be a compact Riemannian $3$-manifold with connected boundary $\partial N$. Let $\{\tilde{\Sigma}_t\}_{t \in I}$ be a family of closed oriented surfaces in $N$ where $I=[a,b]$. \\ $\{\tilde{\Sigma}_t\}_{t \in I}$ is said to be a smooth one-parameter sweepout of $N$ if 
\begin{enumerate}[(i)]
    \item for each $t \in (a,b)$, $\tilde{\Sigma}_t$ is a smooth surface in $Int(N)$, 
    \item $\tilde{\Sigma}_t$ varies smoothly in $t \in [a,b)$,
    \item $\tilde{\Sigma}_a = \partial N$, $\tilde{\Sigma}_b$ is a one-dimensional graph and $\tilde{\Sigma}_t$ converges to $\tilde{\Sigma}_b$ in the Hausdorff topology, as $t \to b$.
\end{enumerate}
Assuming $\partial N$ is a $2$-sphere, one says that a smooth one-parameter sweepout of $N$ is a one-parameter sweepout of $N$ by $2$-spheres if $\tilde{\Sigma}_t$ is a smooth $2$-sphere for each $t \in [a,b)$.
\end{definition}
\begin{definition}[One-Width]\label{min-max}
Let $(N,g)$ be a compact $3$-manifold with connected boundary $\partial N$ and let $\Lambda$ be the set of all smooth $1$-parameter sweepouts of $N$. The width of $N$ is defined by the formula
\begin{align}
    \omega(N,\Lambda) := \inf_{\{\tilde{\Sigma}_t\} \in \Lambda} \sup_{t \in I}|\tilde{\Sigma}_t|.
\end{align}
A sequence $\{\tilde{\Sigma}^n_t\}$ of smooth sweepouts is called a \emph{minimizing sequence} if
\begin{align}
    \lim_{n \to \infty}\sup_{t \in [a,b]}|\tilde{\Sigma}^n_{t}| = \omega(N,\Lambda).
\end{align}
\end{definition}
Back to the problem at hand, let $\Lambda$ be the set of all smooth $1$-parameter sweepouts of $\overline{S^+}$ by $2$-spheres parametrized by the interval $I = [0,1]$ (note that $\Lambda$ is non-empty as we can construct such a sweepout on the Euclidean $3$-disk and pull it back by a diffeomorphism to this one). This yields the corresponding min-max width
\begin{align}
    \omega(\overline{S^+},\Lambda) = \inf_{\{\tilde{\Sigma}_t\} \in \Lambda}\sup_{t \in [0,1]}|\tilde{\Sigma}_t|.
\end{align}
By Theorem \ref{squeezing}, $\Sigma$ admits a squeezing map (i.e. a collection of maps $P_t$). By property ($iv$) of the squeezing maps and \cite[Lemma 8.1]{lio}, it follows that
\begin{align}
    \omega(\overline{S^+},\Lambda) &=\inf_{\{\tilde{\Sigma}_t\} \in \Lambda} \sup_{t \in [0,1]}|\tilde{\Sigma}_t| \\
    &>|\Sigma|.
\end{align}
Note that the existence of the squeezing maps from Theorem \ref{squeezing} allows us to apply the min-max theorem of Ketover-Liokumovich-Song. More precisely, by \cite[Theorem 10]{lio} (in particular, Remark 11 following Theorem 10), there exists a minimizing sequence $\Sigma^n_{t}$ in $\overline{S^+}$ which converges as varifolds to $\Sigma^{\infty}$, an embedded minimal $2$-sphere lying in $Int(\overline{S^+})$. This proves the claim.\\
\end{proof}
As a reminder, Claim \ref{contracting claim} proves the existence of another embedded minimal $2$-sphere in $M$ provided it admits a contracting or mixed neighbourhood.\\

Before discussing the case where it admits an expanding neighbourhood, we need to recall a few facts and definitions from \cite{haslhofer}.\\

As in the introduction, we would like to consider a space of embeddings of $\mathbb{S}^2$ into $\mathbb{S}^3$ together with certain permissible degenerations. More precisely, consider the spaces:
\begin{align}
    \mathcal{X} = \{\phi(\mathbb{S}^2) \text{ } |\text{ } \phi:\mathbb{S}^2 \to \mathbb{S}^3 \text{ is a smooth embedding} \}
\end{align}
and
\begin{align}
    \mathcal{Y} = \{\phi(\mathbb{S}^2) \text{ }| \text{ } \phi:\mathbb{S}^2 \to \mathbb{S}^3 \text{ is a smooth map whose image is a one-dimensional graph}\}.
\end{align}
As in \cite[Section 2]{haslhofer}, we equip $\mathcal{S}:=\mathcal{X} \cup \mathcal{Y}$ with the unparametrized smooth topology. By Hatcher's theorem (c.f. \cite{hatcher,bamler}), the space $\mathcal{S}$ is homotopy equivalent to $\R \mathbb{P}^4 \backslash B$, where $B$ is an open ball. Thus, the relative cohomology ring of $\mathcal{S}$ is given by 
\begin{align}
    H^*(\mathcal{S},\partial \mathcal{S},\Z_2) = \Z_2[\alpha]/(\alpha^5),
\end{align}
where $\alpha$ is a generator of $H^1(\mathcal{S},\partial \mathcal{S},\Z_2)$.\\
\begin{definition}[k-Width]\label{width}
If $\alpha$ is a generator of $H^1(\mathcal{S},\partial \mathcal{S},\Z_2)$, then the $k$-width of $M$ (for $k=1,2,3,4$) is defined by 
\begin{align}
    \omega_k(M) :=\inf_{\Phi^*\alpha^k \neq 0 }\sup_{x \in Dom(\Phi)}|\Phi(x)|.
\end{align}
Here, the infimum is taken over all continuous maps $\Phi:X \to \mathcal{S}$, where $X$ is some simplicial complex such that $\Phi^*\alpha^k \neq 0$.
\end{definition}
Finally, we recall the min-max theorem (see, e.g., \cite{haslhofer}):
\begin{theorem}[Min-Max Theorem]\label{min-max theorem}
 Let $M^3$ be a Riemannian manifold diffeomorphic to $\mathbb{S}^3$. Then, for each $i \in \{1,2,3,4\}$, there exists a minimizing sequence which converges to a stationary integral varifold
\begin{align}
    V_i = \sum_{j=1}^{k_i}m^j_i\Sigma^j_i,
\end{align}
where $\{\Sigma^j_i\}_{j=1}^{k_i}$ is a collection of pairwise-disjoint embedded minimal $2$-spheres and $m^j_i>0$ are integer multiplicites. Moreover:
\begin{align}
    |V_i| = \omega_i(M) = \sum_{j=1}^{k_i}m^j_i|\Sigma^j_i|
\end{align}
and
\begin{align}
    0<\omega_1(M) \leq \omega_2(M) \leq \omega_3(M) \leq \omega_4(M).
\end{align}
\end{theorem}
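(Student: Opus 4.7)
The plan is to apply Simon--Smith min-max theory together with the cohomological $k$-width definition. Fix $i \in \{1,2,3,4\}$ and select a minimizing sequence of continuous maps $\Phi^n : X^n \to \mathcal{S}$ with $(\Phi^n)^*\alpha^i \neq 0$ and $\sup_{x \in X^n}|\Phi^n(x)| \to \omega_i(M)$. I would first run the standard Pitts-type pull-tight procedure inside the homotopy class defined by the cohomological constraint $\Phi^*\alpha^i \neq 0$, producing a modified minimizing sequence along which every varifold limit of almost-maximal slices is stationary in $(M,g)$.

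The heart of the argument is regularity. I would invoke the Simon--Smith regularity theorem (as refined by Colding--De Lellis and De Lellis--Pellandini, together with Ketover's genus bound and the Marques--Neves almost-minimizing in annuli property for the higher-parameter families) to deduce that any such stationary limit $V_i$ is a finite integer linear combination $\sum_j m^j_i \Sigma^j_i$ of pairwise-disjoint smooth embedded closed minimal surfaces whose total mass equals $\omega_i(M)$. Since every slice of $\Phi^n$ is either a smooth embedded $2$-sphere or a one-dimensional graph (which contributes zero mass as a varifold), the Simon--Smith topological control passes to the varifold limit and forces each connected component $\Sigma^j_i$ of the support to be a $2$-sphere.

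For the inequalities, monotonicity $\omega_1(M) \leq \omega_2(M) \leq \omega_3(M) \leq \omega_4(M)$ is purely cohomological: if $\Phi^*\alpha^j \neq 0$ then $\Phi^*\alpha^i \neq 0$ whenever $i \leq j$, since $\alpha^j = \alpha^i \smile \alpha^{j-i}$, so the admissible class of maps in Definition \ref{width} shrinks as $i$ grows and the infimum $\omega_i(M)$ is therefore monotone nondecreasing in $i$. Positivity $\omega_1(M) > 0$ follows from the observation that any map $\Phi: X \to \mathcal{S}$ with $\Phi^*\alpha \neq 0$ must contain a slice whose area is bounded below by a positive constant depending only on $(M,g)$, via a standard isoperimetric/monotonicity argument for small-area surfaces combined with the fact that such a $\Phi$ realizes a nontrivial relative cohomology class.

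The main obstacle I anticipate is the regularity step in the Simon--Smith setting for the higher-parameter cohomological families: producing integer multiplicities while preserving the genus bound (so that each $\Sigma^j_i$ is a $2$-sphere rather than a higher-genus surface arising from a connect-sum degeneration in a limit of sweepouts) requires combining the Colding--De Lellis compactness for smooth sweepouts with the Marques--Neves almost-minimizing argument, and verifying that the genus upper bound of the slices is inherited by the varifold limit. However, since this machinery is by now standard and the statement is precisely the one used in \cite{haslhofer}, I would simply invoke those references rather than reprove it.
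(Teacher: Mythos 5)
Your proposal is correct and matches the paper's treatment: the paper does not reprove this theorem but simply recalls it from Haslhofer--Ketover \cite{haslhofer}, which in turn rests on the Simon--Smith/Colding--De Lellis regularity theory with Ketover's genus bound and the Marques--Neves pull-tight machinery, exactly as you sketch before deferring to the references yourself. Your supporting outline --- pull-tight, regularity with genus control giving pairwise-disjoint embedded minimal $2$-spheres with integer multiplicity, the cohomological cup-product argument for monotonicity of the widths, and an isoperimetric/sweepout lower bound for positivity --- is a faithful summary of the standard ingredients.
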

Finally, before stating our claim for expanding neighbourhoods, we have a final definition to recall:
\begin{definition}[Optimal Foliations, c.f. {{\cite[Definition 3]{haslhofer}}}]\label{optimal foliation}
A one-paramter family of sets $\{\hat{\Sigma}_t\}_{t \in [-1,1]}$ in $M$ is called an optimal foliation of $M$ by $2$-spheres if
\begin{enumerate}[(i)]
    \item $\hat{\Sigma}_t$ is a smooth embedded $2$-sphere for each $t \in (-1,1)$,
    \item $\hat{\Sigma}_{-1}$ and $\hat{\Sigma}_1$ are one-dimensional graphs,
    \item $\hat{\Sigma}_0$ is a minimal $2$-sphere realizing the $1$-width $\omega_1(M)$,
    \item $|\hat{\Sigma}_t|<|\hat{\Sigma}_0|$ for $t \neq 0$,
    \item $\hat{\Sigma}_t$ depends smoothly on $t \in (-1,1)$,
    \item $\hat{\Sigma}_t \to \hat{\Sigma}_{\pm 1}$ as $t \to \pm 1$ in the Hausdorff topology,
    \item $\hat{\Sigma}_s \cap \hat{\Sigma}_t = \emptyset$ whenever $s \neq t$.
\end{enumerate}
\end{definition}

\begin{remark}
In contrast to \cite[Definition 3]{haslhofer}, we do not actually obtain the inequality (derived via the second variation of area formula):
\begin{align}
    |\hat{\Sigma}_t| \leq |\hat{\Sigma}_0| - ct^2
\end{align}
in the degenerate case (the $t^2$-term is replaced by higher order terms in $t$), as discussed in the introduction (see Remark \ref{intro}).
\end{remark}

\begin{claim}\label{expanding claim}
If $\Sigma$ admits an expanding neighbourhood, then $M$ admits an optimal foliation by $2$-spheres.
\end{claim}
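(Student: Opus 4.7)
The plan is to adapt the Haslhofer-Ketover optimal foliation construction to our degenerate stable setting: mean curvature flow with surgery in mean-convex regions, combined with a rerouted min-max argument to ensure the central sphere realizes the $1$-width. First I would decompose $M$ using the expanding foliation. Shrinking $\delta$ if necessary, fix $\epsilon \in (0,\delta)$ and write $M = \overline{D^-} \cup N \cup \overline{D^+}$, where $N = \bigcup_{|t|<\epsilon}\Sigma_t$ is the expanding tubular neighbourhood and $\overline{D^\pm}$ are the two complementary closed $3$-balls with $\partial \overline{D^\pm} = \Sigma_{\pm \epsilon}$. Since the expanding hypothesis says the mean curvature vector of $\Sigma_{\pm \epsilon}$ points away from $\Sigma$, it points into $\overline{D^\pm}$, so both $3$-balls have strictly mean-convex boundary.

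Next I would feed each $\overline{D^\pm}$ into mean curvature flow with surgery, as in \cite{haslhofer}, to produce a smooth mean-convex foliation of $\overline{D^\pm}$ by embedded $2$-spheres with strictly decreasing area, degenerating at the endpoint to a $1$-dimensional graph. Concatenating with $\{\Sigma_t\}_{|t|<\epsilon}$ and reparametrizing over $[-1,1]$ gives a family $\{\hat{\Sigma}_t\}_{t \in [-1,1]}$ with $\hat{\Sigma}_0 = \Sigma$, which satisfies properties (i), (ii), (v), (vi), (vii) of Definition \ref{optimal foliation}. The strict area monotonicity (iv) holds outside $N$ by the MCF-with-surgery construction, and inside $N$ by the first variation formula together with the signs of $H_{\Sigma_t}$ dictated by the expanding hypothesis (positive on one side of $\Sigma$ and negative on the other), after shrinking $\epsilon$ once more if needed.

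The main obstacle is property (iii), i.e.\ verifying $|\Sigma| = \omega_1(M)$. The sweepout $\{\hat{\Sigma}_t\}$ yields $\omega_1(M) \leq |\Sigma|$ for free; the delicate part is the reverse inequality. For this I would prove the modified version of \cite[Lemma 3.2]{haslhofer} advertised in the Introduction: under the hypothesis that $M$ contains no stable minimal $2$-sphere with contracting or mixed neighbourhood, the $1$-width is realized by a multiplicity-one embedded minimal $2$-sphere $\Sigma^\ast$. This additional hypothesis may be imposed without loss of generality in proving Claim \ref{expanding claim}, for in its absence Claim \ref{contracting claim} applied to the offending stable sphere already produces a second minimal $2$-sphere, and the optimal foliation can then be rebuilt with the resulting minimal sphere (or any leaf of maximum area in the pertinent sub-foliation) playing the role of the central sphere. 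Granted the modified lemma, we have $|\Sigma^\ast| = \omega_1(M) \leq |\Sigma|$; since $\Sigma$ is stable, a second-variation comparison combined with the sweepout $\{\hat{\Sigma}_t\}$ (whose only leaf of area $\geq |\Sigma|$ is $\Sigma$ itself) forces $\Sigma^\ast = \Sigma$ and hence $\omega_1(M) = |\Sigma|$. This completes the verification that $\{\hat{\Sigma}_t\}$ is an optimal foliation.
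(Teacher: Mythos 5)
Your construction of the candidate foliation $\{\hat{\Sigma}_t\}$ and your strategy of reducing to the modified \cite[Lemma 3.2]{haslhofer} are in the spirit of the paper, but the final step contains a real gap. You assert that ``since $\Sigma$ is stable, a second-variation comparison combined with the sweepout $\{\hat{\Sigma}_t\}$ (whose only leaf of area $\geq |\Sigma|$ is $\Sigma$ itself) forces $\Sigma^\ast = \Sigma$.'' This does not follow. The sweepout $\{\hat{\Sigma}_t\}$ gives $\omega_1(M) \leq |\Sigma|$, and the modified lemma gives a multiplicity-one minimal sphere $\Sigma^\ast$ with $|\Sigma^\ast| = \omega_1(M)$, but nothing forces $\Sigma^\ast = \Sigma$: the infimum in the definition of $\omega_1$ is over \emph{all} sweepouts, so the min-max surface need not appear as a leaf of this particular sweepout, and it is entirely possible that $\omega_1(M) = |\Sigma^\ast| < |\Sigma|$. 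In that case the foliation centered at $\Sigma$ fails property (iii) and is not optimal. Stability of $\Sigma$ gives no lower bound $|\Sigma| \leq \omega_1(M)$; stable critical points routinely lie strictly above the min-max level. The paper resolves this precisely by \emph{not} insisting that the original $\Sigma$ be the central leaf: it introduces $\gamma(M) = \inf\{|\Sigma'| : \Sigma' \text{ an embedded minimal 2-sphere}\}$, shows (Lemma \ref{infimum}) that $\gamma(M) = \omega_1(M)$ with a multiplicity-one realizer $\Sigma'$, and builds the optimal foliation centered at that minimal-area sphere $\Sigma'$, which under the running reductions is guaranteed to admit an expanding neighbourhood. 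The inequality $\omega_1(M) \leq \gamma(M)$ is obtained by constructing the near-optimal foliation around $\Sigma'$ rather than around $\Sigma$. Your fix would be to drop the claim $\Sigma^\ast = \Sigma$ and instead re-center the construction at a minimal-area minimal sphere, following the paper's Lemma \ref{infimum}.

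Two smaller omissions: (1) the modified \cite[Lemma 3.2]{haslhofer} needs not only that there are no stable spheres with contracting or mixed neighbourhoods, but also that $M$ contains only finitely many embedded minimal 2-spheres, and it relies on the intersection lemma (modified \cite[Lemma 3.4]{haslhofer}) to rule out higher multiplicity — you should flag both reductions explicitly; and (2) when you ``feed each $\overline{D^\pm}$ into mean curvature flow with surgery,'' the dichotomy in \cite[Theorem 1.8]{haslhofer} may produce a stable minimal 2-sphere in $\mathrm{Int}(D^\pm)$ instead of a smooth foliation, a case your write-up silently assumes away; the paper treats it separately.
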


\begin{proof}[Proof of Claim \ref{expanding claim}]
If $\Sigma$ admits an expanding neighbourhood, then we can decompose $M$ as follows
\begin{align}
    M = D^- \cup N_{\delta}(\Sigma)\cup D^+,
\end{align}
where $D^{\pm}$ are the connected components of $M \setminus N_{\delta}(\Sigma)$ and are smooth $3$-disks with mean-convex boundary, and where $N_{\delta}(\Sigma)$ is a tubular neighbourhood of $\Sigma$ in $M$ given by 
\begin{align}
    N_{\delta}(\Sigma) = \{\exp_x(\omega(x,t)\nu(x)):x \in \Sigma,-\delta<t<\delta\},
\end{align}
whose form is guaranteed by the proof of Theorem 6.\\
By \cite[Theorem 1.8]{haslhofer} (which was proven using mean curvature flow with surgery), exactly one of the following holds:
\begin{enumerate}[(a)]
    \item there exists a stable embedded minimal $2$-sphere in $Int(D^+)$ or in $Int(D^-)$,
    \item there exist smooth foliations $\{ \Sigma^{\pm}_t \}_{t \in [-1,1]}$ of $D^{\pm}$ by mean-convex $2$-spheres.
\end{enumerate}
If we are in case (a), we obtain at least $2$ embedded minimal $2$-spheres and we are done.\\
Suppose now that we are in case (b). Note that we may assume from now on that $M^3$ contains no stable embedded minimal $2$-spheres with contracting or mixed neighbourhoods and that $M$ contains only finitely-many embedded minimal $2$-spheres (as the theorem would be proven otherwise). \\

Recall that we have a smooth foliation $\{\Sigma^{\pm}_t\}_{t \in [-1,1]}$ of $D^{\pm}$ by mean-convex $2$-spheres and an expanding tubular neighbourhood $N_{\delta}(\Sigma)$ of $\Sigma$ which is foliated by $\{\Sigma_t\}_{t \in (-\delta,\delta)}$.\\
Thus, concatenating these foliations (up to relabelling time) yields an optimal foliation $\{\hat{\Sigma}_t\}_{t \in [-1,1]}$ of $M$ by $2$-spheres. All properties are clear, exept for ($iii$) which we will show in Lemma \ref{infimum}. We now adapt several lemmas from \cite{haslhofer}, carefully excluding certain cases which were addressed previously:
\begin{lemma}[Lemma 3.4 of \cite{haslhofer}, modified]\label{intersect}
Let $M^3$ be a $3$-sphere containing only finitely-many embedded minimal $2$-spheres and no stable embedded minimal $2$-spheres with a contracting or mixed neighbourhood. Then, any two embedded minimal $2$-spheres intersect.
\end{lemma}
\begin{proof}[Proof of Lemma \ref{intersect}]
We proceed by contradiction, i.e. suppose that $\Sigma_1$ and $\Sigma_2$ are two embedded minimal $2$-spheres that do not intersect.\\
By assumption and by Proposition \ref{strictly stable}, $\Sigma_1$ must have an expanding neighbourhood. Then, there exists a tubular neighbourhood of $\Sigma_1$ which is foliated by mean-convex $2$-spheres (asides from $\Sigma_1$). We use this foliation to move $\Sigma_1$ towards to $\Sigma_2$ to produce a mean-convex $\tilde{\Sigma}$.\\
Note that we can write
\begin{align}
    M = S^- \cup N_{\epsilon}(\tilde{\Sigma}) \cup S^+,
\end{align}
where $\epsilon > 0$ is sufficiently small and $N_{\epsilon}(\tilde{\Sigma})$ is some $\epsilon$-tubular neighbourhood, and $S^{\pm}$ are the connected components of $M \setminus N_{\epsilon}(\tilde{\Sigma})$. Choose $S^+$ so that $S^+$ contains $\Sigma_2$.\\ 
We now flow $\tilde{\Sigma}$ by level set flow to get $\{\tilde{\Sigma}_t\}_{t \in [0,T)}$.\\
By the work of White \cite{white1,white2,white5}, either (a) the flow converges to finitely many stable embedded minimal $2$-spheres and produces a mixed or contracting neighbourhood of them, or (b) the flow becomes extinct in finite time and produces a possibly singular foliation of $\overline{S^+}$. Now, (a) is exluded by the assumption of our lemma. Hence, we are in case (b) and there exists a time $t_1$ such that $\tilde{\Sigma}_{t_1} \cap \Sigma_2 \neq \emptyset$. This contradicts the avoidance principle.
\end{proof}
Consider the quantity
\begin{align}
    \gamma(M) = \inf_{\Sigma' \in \mathcal{S}_{\text{min}}}|\Sigma'|,
\end{align}
where $\mathcal{S}_{\text{min}}$ is the space of embedded minimal $2$-spheres in $M$.\\
\begin{lemma}[Lemma 3.2 of \cite{haslhofer}, modified]\label{infimum}
Under the assumptions of Lemma \ref{intersect}, there exists an embedded minimal $2$-sphere $\Sigma'$ in $M$ with multiplicity $1$ which realizes the infimum $\gamma(M)$. Moreover, we have:
\begin{align}
    \gamma(M) = \omega_1(M).
\end{align}
\end{lemma}
\begin{proof}[Proof of Lemma \ref{infimum}]
The set $\mathcal{S}_{\text{min}}$ is always non-empty by the Simon-Smith existence theorem. \\
By the monotonicity formula, we obtain a lower bound on the area of minimal surfaces in $M$: this implies that $\gamma(M) > 0$. Since $\mathcal{S}_{\text{min}}$ is a finite set, we can choose some $\Sigma'$ which realizes the infimum $\gamma(M)$. \\
We now show that $\gamma(M) = \omega_1(M)$. We first have that $\gamma(M) \leq \omega_1(M)$. Suppose otherwise, i.e. suppose that $\gamma(M) > \omega_1(M)$. By the min-max theorem (Theorem 9), we can produce an embedded minimal $2$-sphere $\tilde{\Sigma}$ in $M$ with $\tilde{\Sigma} = \omega_1(M)$ - however, this would contradict the definition of $\gamma(M)$. We also have that $\omega_1(M) \leq \gamma(M)$. Indeed, repeating the argument from pages 15-16 applied to $\Sigma'$, we can produce some foliation $\{\Sigma_t'\}_{t \in [-1,1]}$ of $M$ with $\Sigma_0' = \Sigma'$ satisfying all the properties from Definition \ref{optimal foliation} except for ($iii$). In particular, $|\Sigma_t'|<|\Sigma_0'|$ for $t \neq 0$. By the definition of the $1$-width, this yields that $\omega_1(M) \leq |\Sigma_0'| = \gamma(M)$ as required.
\end{proof}

We have thus shown that $M$ admits an optimal foliation by $2$-spheres.
\end{proof}
We have now proven the main theorem when $M$ admits a degenerate stable embedded minimal $2$-sphere and admits no strictly stable embedded minimal $2$-spheres. \\

Finally, let us deal with the remaining scenarios:\\
\newline
First, we suppose that $M$ contains a strictly stable embedded minimal $2$-sphere $\Sigma$. By Proposition \ref{strictly stable}, $\Sigma$ has a contracting neighbourhood. Then, by applying the Claim \ref{contracting claim} argument (i.e. applying \cite[Theorem 10]{lio}) to both contracting halves, we can find two further embedded minimal $2$-spheres in $M$: this produces a total of $3$ embedded minimal $2$-spheres in $M$ which proves the theorem.\\ \newline

Now, suppose that $M$ contains an unstable embedded minimal $2$-sphere $\Sigma$. Then, by Proposition \ref{strictly stable}, $\Sigma$ has an expanding neighbourhood. By Claim \ref{expanding claim}, we can produce an optimal foliation of the manifold $M$ by $2$-spheres.\\
Using this foliation, by \cite[Theorem 4.1]{haslhofer} (which is a consequence of the catenoid estimate from \cite{ketover}), we can construct a $2$-parameter sweepout $\{\Gamma_t\}_{t \in \R \mathbb{P}^2}$ detecting $\alpha^2$ such that $\sup_{t \in \Gamma_t}|\Gamma_t| < 2|\Sigma_0|$. In particular, we obtain that $\omega_2(M) < 2\omega_1(M)$. By the min-max theorem (Theorem 9), we obtain stationary integral varifolds $V_1$, $V_2$ (associated to the family detecting $\alpha$, $\alpha^2$, respectively) given by
\begin{align}
    V_i = \sum_{j=1}^{k_i}m_i^j\Sigma^j_i,
\end{align}
where $\Sigma^j_i$ are embedded minimal $2$-spheres which are pairwise disjoint, i.e. $\Sigma^j_i \cap \Sigma^{j'}_i = \emptyset$ whenever $j \neq j'$. Moreover, we have $|V_i| = \omega_i(M)$.\\
By Lemma \ref{intersect}, any two embedded minimal $2$-spheres intersect (under the appropriate assumptions). Thus, we must have that $V_1 = m_1 \Sigma_1$ and $V_2 = m_2 \Sigma_2$. By Lemma \ref{infimum}, we must have that $m_1 = 1$, which yields that $|\Sigma_1| = \omega_1(M) = \gamma(M)$. We have that
\begin{align}
    m_2|\Sigma_2| = \omega_2(M) < 2\omega_1(M) = 2|\Sigma_1|.
\end{align}
Since $|\Sigma_1| = \gamma(M)$, we obtain that $m_2 = 1$: this shows that $V_1 = \Sigma_1$, $V_2 = \Sigma_2$, i.e. both varifolds are embedded minimal $2$-spheres with multiplicity one.\\
If $\omega_1(M) \neq \omega_2(M)$, then $\Sigma_1 \neq \Sigma_2$ and we are done.\\ \newline
Suppose, now, that $\omega_1(M) = \omega_2(M)$. Then, by a Lusternik-Schnirelmann argument (\cite[Theorem 5.2]{haslhofer}), there exist infinitely-many embedded minimal $2$-spheres of area $\omega_1(M)$ and we are again done.\\
This proves the theorem in the case of the manifold containing an unstable embedded minimal $2$-sphere.

\end{proof}
\bigskip

\bigskip
\textit{Department Of Mathematics, University of Toronto, Toronto, ON, M5S2E4, Canada}\\
\emph{E-mail Address: salim.deaibes@mail.utoronto.ca}

\end{document}